\newtheorem{theorem}{Theorem}
\newtheorem{prob}{Problem}
\newtheorem{proposition}{Proposition}
\newtheorem{definition}{Definition}
\begin{document}

\title{Minimum-Time Transitions between Thermal and Fixed Average Energy States of the Quantum Parametric Oscillator}

\author{
  Dionisis Stefanatos,\\
  Division of Physical Sciences and Applications,\\
  Hellenic Army Academy,\\
  Vari, Athens 16673,\\
  Greece\\
  \texttt{dionisis@post.harvard.edu}}

\maketitle

\begin{abstract}
In this article we use geometric optimal control to completely solve the problem of minimum-time transitions between thermal equilibrium and fixed average energy states of the quantum parametric oscillator, a system which has been extensively used to model quantum heat engines and refrigerators. We subsequently use the obtained results to find the minimum driving time for a quantum refrigerator and the quantum finite-time availability of the parametric oscillator, i.e. the potential work which can be extracted from this system by a very short finite-time process.
\end{abstract}



\section{Introduction}

\label{sec:intro}

One of the most important reasons for the study of thermodynamics is the development of efficient heat engines and refrigerators. Since modern technology allows the exploitation of tiny length scales, quantum phenomena come into play and determine the behavior of heat machines with small dimensions. As a consequence, studying the properties of quantum heat engines and refrigerators has recently attracted a considerable interest \cite{Feldmann03,Esposito10,Scully11,Abah12,Azimi14,Zhang14,Hardal15,Liu16,Rossnagel16}. In these works, various physical systems are suggested as candidates for the implementation of quantum heat machines, but they all share a common goal: to extract the maximum available work in the minimum possible time. Once the initial and final states which lead to the maximum work extraction have been identified, the problem reduces to finding the minimum-time transition between them. Another important motivation to quickly perform the various steps involved in the thermodynamic cycles of the machines is to reduce the undesirable effects of the environment, which lead to dissipation and decoherence. Several methods have been suggested to speed up quantum heat engines. The simple and robust method of shortcuts to adiabaticity provides a fast interpolation of the path between the initial and the final states \cite{Deng13,Adolfo14,Adolfo16}. Optimal control has been used to obtain the minimum necessary time and the corresponding control which can drive the system between the desired states, under constraints imposed by the experimental setup \cite{Salamon09,Hoffmann13,Stefanatos16a}. And optimization has been exploited in more complex situations, where analytical results are difficult or impossible to find \cite{Stefanatos14,Xiao15,Rezek06}.

The prototype system which has been extensively used in the literature as a model of a quantum heat machine is the quantum parametric oscillator \cite{Salamon09}, a quantum harmonic oscillator whose angular frequency can be altered with time and serves as the control parameter \cite{Rezek06,Abah16}. For this system it was shown in \cite{Salamon09} that, starting from a thermal equilibrium state and changing the frequency from some initial to a lower final value, the maximum work is extracted when the final state of the system is also an equilibrium state. An analytical estimate of the necessary minimum time was also given. In our recent work \cite{Stefanatos16a} we used geometric optimal control and completely solved the problem of minimum-time transitions between thermal equilibrium states of the quantum parametric oscillator, identifying a new type of solution absent from all the previous treatments of the problem \cite{Salamon09,Tsirlin11,Hoffmann13,Boldt16}.

In the present article we study another important problem in the same framework. Specifically, we consider the situation where the initial state of the quantum parametric oscillator is again a thermal equilibrium state, but the final state has now fixed average energy and is not necessarily in thermal equilibrium. Finding the minimum time for this kind of transitions can quantify the so-called \emph{quantum finite-time availability} of the system. This concept describes the potential work which can be extracted from the system by a finite-time process which is too short to gain the maximum available work by bringing the quantum system into thermal equilibrium \cite{Hoffmann15b,Hoffmann15}. The minimum-time solution can also be used to calculate the minimum driving time of a quantum refrigerator, below which the heat machine ceases to operate as a refrigerator. We explain in detail the connection between the control problem and these two important applications from quantum thermodynamics later in the text.

In order to solve the problem of minimum-time transitions we use optimal control theory \cite{Pontryagin,Heinz12}, which has also provided the fastest quantum dynamics in several quantum control applications \cite{Wu02,Boscain05,Boscain06,Bonnard09,Stefanatos11,Bonnard13,Albertini15,Stefanatos16a}.
The paper is organized as follows. In the next section we formulate the problem in terms of optimal control and we subsequently solve it in section \ref{sec:solution}. In section \ref{sec:applications} we explain the connection between this problem and the applications from quantum thermodynamics, while the conclusions follow in
section \ref{sec:conclusions}.

\section{Formulation as an optimal control problem}
\label{sec:formulation}

The system that we consider in this article is a particle of mass $m$ trapped in a parametric harmonic oscillator \cite{Salamon09,Tsirlin11,Hoffmann13,Hoffmann15,Boldt16}. The corresponding Hamiltonian is
\begin{equation}
\label{Hamiltonian}
\hat{H}=\frac{\hat{p}^2}{2m}+\frac{m\omega^2(t)\hat{q}^2}{2},
\end{equation}
where $\hat{q}, \hat{p}$ are the position and momentum operators, respectively, and $\omega(t)$ is the time-varying frequency of the oscillator which serves as the available control and is restricted as
\begin{equation}
\label{frequency_boundary}
\omega(t)=\left\{\begin{array}{cl} \omega_0, & t\leq 0 \\\omega_f, & t\geq T\end{array}\right.
\end{equation}
and
\begin{equation}
\label{frequency}
\omega_f\leq\omega(t)\leq\omega_0,\quad 0<t<T,
\end{equation}
i.e. between its initial and final values $\omega_0,\omega_f$ for the whole time interval with duration $T$.
The time evolution of a quantum observable (hermitian operator) $\hat{O}$ in the Heisenberg picture is given by \cite{Merzbacher98}
\begin{equation}
\label{Observable}
\frac{d\hat{O}}{dt}=\frac{i}{\hbar}[\hat{H},\hat{O}]+\frac{\partial\hat{O}}{\partial t},
\end{equation}
where $i=\sqrt{-1}$ and $\hbar$ is Planck's constant.
The following operators form a closed set under the time evolution generated by $\hat{H}$ \cite{Boldt16}
\begin{equation}
\label{z_operators}
\hat{z}_1=m\hat{q}^2,\quad \hat{z}_2=\frac{\hat{p}^2}{m},\quad \hat{z}_3=-\frac{i}{2\hbar}[\hat{z}_1,\hat{z}_2]=\hat{q}\hat{p}+\hat{p}\hat{q}.
\end{equation}
It is sufficient to follow the expectation values
\begin{equation}
\label{expectations}
z_i=\langle\hat{z}_i\rangle=\mbox{Tr}(\rho_0\hat{z}_i),\quad i=1, 2, 3
\end{equation}
of these operators, where $\rho_0$ is the density matrix corresponding to the initial state of the system at $t=0$ (recall that we use the Heisenberg picture).
From (\ref{Observable}) and (\ref{expectations}) we easily find
\begin{align}
\label{z1}\dot{z}_1  &  = z_3,\\
\label{z2}\dot{z}_2  &  = -\omega^2z_3,\\
\label{z3}\dot{z}_3  &  = -2\omega^2z_1+2z_2.
\end{align}
In order to find the initial values of $z_i$ note that states of thermodynamic equilibrium, with $\omega(t)=\omega$ constant, are characterized by the equipartition of energy $E=\langle\hat{H}\rangle$
\begin{equation}
\label{equipartition}
\left\langle\frac{\hat{p}^2}{2m}\right\rangle=\left\langle\frac{m\omega^2\hat{q}^2}{2}\right\rangle=\frac{E}{2}
\end{equation}
and the absence of correlations
\begin{equation}
\label{no_correlation}
\langle\hat{q}\hat{p}+\hat{p}\hat{q}\rangle=0.
\end{equation}
If the system starts at $t=0$ from the equilibrium state with frequency $\omega_0$ and energy $E_0$, using (\ref{equipartition}) and (\ref{no_correlation}) in (\ref{z_operators}) we find
\begin{equation}
\label{z_initial}
z_1(0)=\frac{E_0}{\omega_0^2},\quad z_2(0)=E_0,\quad z_3(0)=0.
\end{equation}

It can be easily verified that, during the evolution of the system, the following quantity, called the Casimir companion, is a constant of the motion \cite{Boldt13}
\begin{equation}
\label{constant}
z_1z_2-\frac{z_3^2}{4}=\frac{E_0^2}{\omega_0^2}.
\end{equation}
Obviously, it is
\begin{equation}
\label{z_ineq}
z_1z_2\geq\frac{E_0^2}{\omega_0^2}.
\end{equation}
The average energy $E$ of the system corresponding to frequency $\omega$ can be expressed in terms of $z_1,z_2$ as
\begin{equation}
\label{E_z}
E=\left\langle\frac{\hat{p}^2}{2m}+\frac{m\omega^2\hat{q}^2}{2}\right\rangle=\frac{\omega^2}{2}z_1+\frac{1}{2}z_2.
\end{equation}
From (\ref{z_ineq}), (\ref{E_z}) and the fact that the frequency is bounded below by $\omega_f$, we can easily obtain the following minimum value $E_{min}$ \cite{Salamon09}
\begin{equation}
\label{Emin}
E\geq\frac{\omega}{\omega_0}E_0\geq\frac{\omega_f}{\omega_0}E_0=E_{min}.
\end{equation}
For the frequency $\omega(t)$ restricted as in (\ref{frequency}), it has been shown that there is a minimum necessary time $T_{min}$ to achieve the minimum energy $E_{min}$ \cite{Salamon09}. In our recent work \cite{Stefanatos16a} we have thoroughly solved the corresponding optimal control problem, completing thus the previous work on the subject \cite{Salamon09,Tsirlin11,Hoffmann13,Boldt16}. If the available time is less than this minimum time, $T<T_{min}$, then the minimum final energy that can be obtained is always larger than $E_{min}$. In the extreme case $T=0$, the so-called ``sudden quench", where the frequency is  instantaneously reduced from $\omega(0^-)=\omega_0$ to $\omega(0^+)=\omega_f$, the final energy is
\begin{equation}
\label{sudden_quench}
E_{sc}=\frac{\omega^2_f}{2}z_1(0)+\frac{1}{2}z_2(0)=\frac{\gamma^4+1}{2\gamma^4}E_0,\quad\gamma=\sqrt{\frac{\omega_0}{\omega_f}}>1,
\end{equation}
where we have used the initial conditions (\ref{z_initial}) in (\ref{E_z}). The problem that we study in this article is to find the time-varying frequency $\omega(t)$ satisfying (\ref{frequency_boundary}) and (\ref{frequency}) which drives the system from the initial equilibrium state to a state with fixed average energy $E_f$ in the range $E_{min}<E_f<E_{sc}$ in minimum time $T$, where obviously $0<T<T_{min}$. Note that energy values in the range $E_{sc}<E_f<E_{0}$ can be instantaneously obtained with a sudden quench to a final frequency $\omega'_f$ such that $\omega_f<\omega'_f<\omega_0$.

In order to solve this problem, we will use the constant of the motion (\ref{constant}) to reduce the dimension of the system from three to two. Let us define the dimensionless variable $b$ through the relations
\begin{equation}
\label{b}
b=\frac{\sqrt{\langle\hat{q}^2\rangle}}{q_0},\quad q_0=\sqrt{\frac{E_0}{m\omega_0^2}},
\end{equation}
where note that $q_0$ has length dimensions. Then, using the definition of $\hat{z}_1$ from (\ref{z_operators}) and Eqs. (\ref{z1})-(\ref{z3}), variables $z_i$ can be expressed in terms of $b$ as follows
\begin{equation}
\label{zeta}z_1=\frac{E_0}{\omega_0^2}b^2,\quad z_2=\frac{E_0}{\omega_0^2}(b\ddot{b}+\dot{b}^2+\omega^2b^2),\quad z_3=\frac{2E_0}{\omega_0^2}b\dot{b}.
\end{equation}
If we plug (\ref{zeta}) in (\ref{constant}), we obtain the following Ermakov equation for $b(t)$ \cite{Ermakov,Chen10}
\begin{equation}
\label{Ermakov}\ddot{b}(t)+\omega^{2}(t)b(t)=\frac{\omega_{0}^{2}}{b^{3}(t)}%
\end{equation}
If we set
\begin{equation}
\label{x}
x_{1}=b,\quad x_{2}=\frac{\dot{b}}{\omega_{0}},\quad u(t)=\frac{\omega^{2}(t)}%
{\omega_{0}^{2}},
\end{equation}
and rescale time according to $t_{new}=\omega_{0} t_{old}$, we
obtain the following system of first order differential equations, equivalent
to the Ermakov equation 
\begin{align}
\label{system1}\dot{x}_{1}  &  = x_{2},\\
\label{system2}
\dot{x}_{2}  &  = -ux_{1}+\frac{1}{x_{1}^{3}},
\end{align}
where
\begin{equation}
\label{u_bounds}
u_1\leq u(t)\leq u_2,\quad u_1=\frac{\omega_f^2}{\omega_0^2}=\frac{1}{\gamma^4}, \quad u_2=\frac{\omega_0^2}{\omega_0^2}=1.
\end{equation}
In order to find the boundary conditions, we express variables $z_i$ in terms of variables $x_i$ using (\ref{zeta}) and (\ref{x})
\begin{equation}
\label{zeta_x}
z_1=\frac{E_0}{\omega_0^2}x_1^2,\quad z_2=E_0(x_2^2+\frac{1}{x_1^2}),\quad z_3=\frac{2E_0}{\omega_0}x_1x_2,
\end{equation}
where note that we have also used (\ref{Ermakov}) to replace the second derivative of $b$ in (\ref{zeta}). Using (\ref{z_initial}) and (\ref{zeta_x}) we obtain the initial conditions
\begin{equation}
\label{x_initial}
x_1(0)=1,\quad x_2(0)=0.
\end{equation}
The energy at the final point $(x_1(T),x_2(T))$, where the frequency is $\omega=\omega_f$, is set to $E=E_f$. Using (\ref{zeta_x}) in (\ref{E_z}), we find that the coordinates of the final point should belong to the following curve
\begin{equation}
\label{final_curve}
x_2^2(T)+u_1x_1^2(T)+\frac{1}{x_1^2(T)}=\frac{2}{r_E},\quad r_E=\frac{E_0}{E_f},
\end{equation}
where note that, since $E_{min}<E_f<E_{sc}$, the energy ratio $r_E$ is in the range
\begin{equation}
\label{r}
\frac{2\gamma^2}{\gamma^4+1}<r_E<\gamma^2,\quad\gamma=\sqrt{\frac{\omega_0}{\omega_f}},
\end{equation}
as derived from (\ref{Emin}) and (\ref{sudden_quench}). We end up with the following optimal control problem for system (\ref{system1}), (\ref{system2}):
\begin{prob}\label{problem1}
Find $u_1\leq u(t)\leq u_2$ with $u(0)=u_2=1, u(T)=u_1=1/\gamma^4$, such that starting from $(x_1(0),x_2(0))=(1,0)$ the above system reaches the final curve (\ref{final_curve}) in minimum time $T$.
\end{prob}

In the next section we solve the following optimal control problem, where we drop the boundary conditions on the control $u$ corresponding to the frequency boundary conditions (\ref{frequency_boundary}), as we did in our previous work \cite{Stefanatos11} and justify below:

\begin{prob}\label{problem2}
Find $u_1\leq u(t)\leq u_2$, with $u_1=1/\gamma^4,u_2=1$, such that starting from $(x_1(0),x_2(0))=(1,0)$ the system above reaches the final curve (\ref{final_curve}) in minimum time $T$.
\end{prob}

In both problems the class of admissible controls formally are Lebesgue measurable functions which take values in the control set $[u_1,u_2]$ almost everywhere. However, as we shall see, optimal controls are piecewise continuous, in fact bang-bang. The optimal control found for Problem \ref{problem2} is also optimal for Problem \ref{problem1}, with the addition of instantaneous jumps at the initial and final points, so that the boundary conditions $u(0)=1$ and $u(T)=1/\gamma^4$ are satisfied. Note that in connection with (\ref{frequency_boundary}), a natural way to think about these conditions is that $u(t)=1$ for $t\leq 0$ and $u(t)=1/\gamma^4$ for $t\geq T$; in the interval $(0,T)$ we pick the control that achieves the desired transfer in minimum time.


\section{Optimal solution}
\label{sec:solution}

In our recent work \cite{Stefanatos16a} we solved the following problem, where the final point was fixed on the $x_1$-axis.
\begin{prob}\label{problem3}
Find $u_1\leq u(t)\leq u_2$, with $u_1=1/\gamma^4,u_2=1$, such that starting from $(x_1(0),x_2(0))=(1,0)$, the system above reaches the final point $(x_1(T),x_2(T))=(\gamma,0), \gamma>1$, in minimum time $T$.
\end{prob}
Obviously, this problem is closely related to Problem \ref{problem2}, and in this section we investigate how our previous solution is modified due to the requirement that the final point now belongs to the curve (\ref{final_curve}).

The system described by (\ref{system1}), (\ref{system2}) can be expressed
in compact form as
\begin{equation}
\dot{x}=f(x)+ug(x), \label{affine}%
\end{equation}
where the vector fields are given by
\begin{equation}
f=\begin{pmatrix}x_{2}\\1/x_{1}^{3}\end{pmatrix},
\quad
g=\begin{pmatrix}0\\-x_{1}\end{pmatrix}
\end{equation}
and $x\in\mathcal{D}=\{(x_{1},x_{2})\in\mathbb{R}^{2}:x_{1}>0\}$, $u\in
U=[u_{1},u_{2}]$. Admissible controls are Lebesgue measurable functions that
take values in the control set $U$. Given an admissible control $u$ defined
over an interval $[0,T]$, the solution $x$ of the system (\ref{affine})
corresponding to the control $u$ is called the corresponding trajectory and we
call the pair $(x,u)$ a controlled trajectory. Note that the domain
$\mathcal{D}$\ is invariant in the sense that trajectories cannot leave
$\mathcal{D}$. Starting with any positive initial condition $x_{1}(0)>0$ and
using any admissible control $u$, as $x_{1}\rightarrow0^{+}$ the
``repulsive force" $1/x_{1}^{3}$ leads to an increase in
$x_{1}$ that will keep $x_{1}$ positive (as long as the solutions exist).

For a constant $\lambda_{0}$ and a row vector $\lambda=(\lambda_{1}%
,\lambda_{2})\in\left(  \mathbb{R}^{2}\right)  ^{\ast}$ define the control
Hamiltonian as%
\[
H=H(\lambda_{0},\lambda,x,u)=\lambda_{0}+\langle\lambda,f(x)+ug(x)\rangle.
\]
Pontryagin's Maximum Principle for \emph{time-optimal} processes \cite{Pontryagin}
provides the following necessary conditions for optimality, which hold for both Problems \ref{problem2} and \ref{problem3}, although the final point for the former is unspecified:

\begin{theorem}[Maximum principle for time-optimal processes]\label{prop:max_principle}
Let $(x_{\ast}(t),u_{\ast}(t))$
be a time-optimal controlled trajectory that transfers the initial condition
$x(0)=x_{0}$ into the terminal state $x(T)=x_T$. Then it is a necessary
condition for optimality that there exists a constant $\lambda_{0}\leq0$ and
nonzero, absolutely continuous row vector function $\lambda(t)$ such that:

\begin{enumerate}
\item $\lambda$ satisfies the so-called adjoint equation%
\[
\dot{\lambda}(t)=-\frac{\partial H}{\partial x}(\lambda_{0},\lambda
(t),x_{\ast}(t),u_{\ast}(t))
\]

\item For $0\leq t\leq T$ the function $u\mapsto H(\lambda_{0}%
,\lambda(t),x_{\ast}(t),u)$ attains its maximum\ over the control set $U$ at
$u=u_{\ast}(t)$.

\item $H(\lambda_{0},\lambda(t),x_{\ast}(t),u_{\ast}(t))\equiv0$.
\end{enumerate}
\end{theorem}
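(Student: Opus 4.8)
The statement is the classical Pontryagin Maximum Principle specialized to the time-optimal problem, so the plan is to reproduce the standard needle-variation and separating-hyperplane argument; the full development is in \cite{Pontryagin} and \cite[Ch.~4]{Heinz12}, and I would only need to check that the control-affine system (\ref{affine}) on the open domain $\mathcal{D}$ fits that framework. The starting observation is that if $(x_*,u_*)$ steers $x_0$ to $x_T$ in minimal time $T$, then, working in the extended $(t,x)$-space, the point $(T,x_T)$ lies on the boundary of the set of pairs $(t,x)$ with $x$ reachable from $x_0$ in time $t$: were it interior, $x_T$ would be reachable in time strictly less than $T$, contradicting optimality.

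The core is a first-order variational analysis around $(x_*,u_*)$. For a point of regularity $\tau$ of $u_*$ and an admissible value $v\in U$, I would use the needle variation that replaces $u_*$ by $v$ on $[\tau-\varepsilon,\tau]$; to first order in $\varepsilon$ this displaces the trajectory at time $\tau$ by $\varepsilon(v-u_*(\tau))\,g(x_*(\tau))$, which is transported to the final time by the linearized flow $\Phi(T,\tau)$, the fundamental solution of the variational equation $\dot w=\big(Df(x_*)+u_*Dg(x_*)\big)w$. Convex combinations of finitely many such needles, together with the direction obtained by perturbing the terminal time, span a convex cone $K$ of attainable directions at $(T,x_T)$ in the extended space. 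The essential technical point — established by a Brouwer/open-mapping argument that controls the higher-order remainders of several simultaneous needles — is that if $K$ were all of $\mathbb{R}^{3}$ then a full neighborhood of $(T,x_T)$ would be reachable, contradicting minimality of $T$; hence $K$ is a proper convex cone and admits a supporting hyperplane.

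Let $(\lambda_0,\lambda(T))\neq 0$ with $\lambda_0\le 0$ be an inward normal to that hyperplane, and extend $\lambda(T)$ to $\lambda(t)$ on $[0,T]$ by the adjoint equation $\dot\lambda=-\lambda\big(Df(x_*)+u_*Dg(x_*)\big)$, which is exactly $\dot\lambda=-\partial H/\partial x$ and gives item~1. Since $t\mapsto\lambda(t)w(t)$ is then constant along solutions $w$ of the variational equation, the supporting-hyperplane inequality at $(T,x_T)$, evaluated on the needle direction, becomes $\langle\lambda(\tau),g(x_*(\tau))\rangle\,(v-u_*(\tau))\le 0$ for every $v\in U$ and almost every $\tau$; equivalently $u_*(\tau)$ maximizes $u\mapsto H(\lambda_0,\lambda(\tau),x_*(\tau),u)$, which is item~2 (the constant $\lambda_0$ not entering the maximization). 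Evaluating the same inequality on the terminal-time direction $\pm\big(1,(f+u_*g)(x_*(T))\big)$ forces $\lambda_0+\langle\lambda(T),(f+u_*g)(x_*(T))\rangle=0$, i.e.\ $H=0$ at $t=T$; and a short calculation from items~1--2 shows that $t\mapsto H(\lambda_0,\lambda(t),x_*(t),u_*(t))$ is constant, giving item~3 for all $t$. The identical conclusions hold when the endpoint is constrained to the curve (\ref{final_curve}) as in Problem~\ref{problem2}: the constraint only adds the transversality condition that $\lambda(T)$ be orthogonal to that curve, which is not among items~1--3. The one real obstacle is the convex-cone/open-mapping step, which I would import verbatim from \cite[Ch.~4]{Heinz12} rather than reprove.
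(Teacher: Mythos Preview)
Your outline is correct and is exactly the standard needle-variation/separating-hyperplane derivation of Pontryagin's principle. The paper, however, does not prove this theorem at all: it merely states it as a classical result and cites \cite{Pontryagin}. So you have gone well beyond what the paper does here; a one-line citation would have matched the paper, whereas your sketch actually supplies the argument behind it.
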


We call a controlled trajectory $(x,u)$ for which there exist multipliers
$\lambda_{0}$ and $\lambda(t)$ such that these conditions are satisfied an
extremal. Extremals for which $\lambda_{0}=0$ are called abnormal. If
$\lambda_{0}<0$, then without loss of generality we may rescale the $\lambda
$'s and set $\lambda_{0}=-1$. Such an extremal is called normal.

For the system (\ref{system1}), (\ref{system2}) we have
\begin{equation*}
H(\lambda_{0},\lambda,x,u)=\lambda_{0}+\lambda_{1}x_{2}+\lambda_{2}\left(  \frac{1}%
{x_{1}^{3}}-x_{1}u\right)  ,
\end{equation*}
and thus
\begin{equation}\label{adjoint}
\dot{\lambda}=-\lambda\begin{pmatrix}0&1\\-(u+3/x_{1}^{4})&0\end{pmatrix}=-\lambda A
\end{equation}
Observe that $H$ is a linear function of the bounded control variable $u$. The
coefficient at $u$ in $H$ is $-\lambda_{2}x_{1}$ and, since $x_{1}>0$, its
sign is determined by $\Phi=-\lambda_{2}$, the so-called \emph{switching
function}. According to the maximum principle, point 2 above, the optimal
control is given by $u=u_{1}$ if $\Phi<0$ and by $u=u_{2}$ if $\Phi>0$. The
maximum principle provides a priori no information about the control at times
$t$ when the switching function $\Phi$ vanishes. However, if $\Phi(t)=0$ and
$\dot{\Phi}(t)\neq0$, then at time $t$ the control switches between its
boundary values and we call this a bang-bang switch. If $\Phi$ were to vanish
identically over some open time interval $I$ the corresponding control is
called \emph{singular}.

\begin{proposition}
Optimal controls are bang-bang and all the extremals are normal.
\end{proposition}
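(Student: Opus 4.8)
We prove the bang-bang property first and then rule out abnormal extremals.

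\emph{Bang-bang.} First I would read off from the adjoint equation (\ref{adjoint}) that $\dot\lambda_2=-\lambda_1$, so the switching function $\Phi=-\lambda_2$ obeys $\dot\Phi=\lambda_1$. Consequently, if $\Phi$ had a zero of multiplicity $\geq 2$ at some time $t_0$ — in particular, if $\Phi$ vanished on an open interval — then $\lambda(t_0)=(\lambda_1(t_0),\lambda_2(t_0))=0$, and since $\lambda$ solves the homogeneous linear system (\ref{adjoint}) this forces $\lambda\equiv0$, contradicting the maximum principle. Hence every zero of $\Phi$ is simple; there are no singular arcs, the zeros are isolated and therefore finitely many on $[0,T]$, and at each one the control jumps between $u_1$ and $u_2$. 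This proves the bang-bang claim, and the argument applies to every extremal, abnormal or not.

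\emph{Switches of an abnormal extremal lie on the $x_1$-axis.} Suppose, for contradiction, that an optimal extremal is abnormal, $\lambda_0=0$. Then by point~3 of the maximum principle $0\equiv H=\langle\lambda,f+u_\ast g\rangle=\langle\lambda,\dot x\rangle$, i.e. $\lambda(t)\perp\dot x(t)$ for all $t$. At a switching time $\tau$ we also have $\Phi(\tau)=-\lambda_2(\tau)=0$, hence $\langle\lambda(\tau),g(x(\tau))\rangle=-\lambda_2(\tau)x_1(\tau)=0$, and therefore $\langle\lambda(\tau),f(x(\tau))\rangle=0$ as well. So the nonzero covector $\lambda(\tau)$ annihilates both $f$ and $g$ at $x(\tau)$, which is impossible unless $\det\!\big(f(x(\tau)),g(x(\tau))\big)=-x_1(\tau)x_2(\tau)$ vanishes; since $x_1>0$, this means $x_2(\tau)=0$. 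Thus every switch of an abnormal extremal occurs on the $x_1$-axis.

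\emph{Ruling out abnormal extremals.} For each constant $u\in\{u_1,u_2\}$ the quantity $x_2^2+ux_1^2+1/x_1^2$ is a constant of motion of (\ref{system1})--(\ref{system2}) (the computation underlying (\ref{final_curve})), so with constant control the trajectory moves on a closed oval about the center $(u^{-1/4},0)$, symmetric in the $x_1$-axis, which it meets exactly at its two turning points; if one turning point has abscissa $a$, the other has abscissa $1/(a\sqrt u)$, i.e. $\gamma^2/a$ when $u=u_1$ and $1/a$ when $u=u_2$. An abnormal extremal is bang-bang with switches confined to the axis, so its control is constant between consecutive axis crossings; since it starts at $x_1(0)=1$ and the maps $a\mapsto\gamma^2/a$ and $a\mapsto 1/a$ preserve the set $\{\gamma^{2k}:k\in\mathbb Z\}$, every axis crossing has abscissa in $\{\gamma^{2k}\}$ (the first arc cannot use $u_2$, since $(1,0)$ is then an equilibrium and the trajectory would be stationary and never reach (\ref{final_curve}); note $(1,0)\notin$~(\ref{final_curve}) because $E_f<E_{sc}$, so $T>0$). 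Now the target curve (\ref{final_curve}) is the level set $\{x_2^2+u_1x_1^2+1/x_1^2=2/r_E\}$ with $2/r_E$ strictly between the minimal value $2\sqrt{u_1}$ (attained only at the $u_1$-center $(\gamma,0)$, i.e. $E_f=E_{min}$) and $1+u_1$ (the level of the $u_1$-oval through $(1,0)$ and $(\gamma^2,0)$, i.e. $E_f=E_{sc}$); hence it is an oval strictly between that center and that oval, meeting the axis at two points with abscissae $\rho_\pm$ satisfying $1<\rho_-<\gamma<\rho_+<\gamma^2$, which are not of the form $\gamma^{2k}$. Consider the bang arc containing the final time $T$; it starts from an axis point of abscissa $\gamma^{2j}$ (or from $(1,0)$). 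If its control is $u_1$, its orbit is the $u_1$-level set at level $\gamma^{4j-4}+\gamma^{-4j}$, which for every $j\in\mathbb Z$ is at least $1+u_1>2/r_E$, so that orbit is disjoint from (\ref{final_curve}). If its control is $u_2$, its orbit is the $u_2$-level set at level $\gamma^{4j}+\gamma^{-4j}$, and eliminating $x_2^2$ between this relation and the defining relation of (\ref{final_curve}) shows that a common point would require $\gamma^{4j}+\gamma^{-4j}\in[\rho_-^2+\rho_-^{-2},\,\rho_+^2+\rho_+^{-2}]$, an interval contained in $(2,\gamma^4+\gamma^{-4})$ (since $1<\rho_-^2$ and $\rho_+^2<\gamma^4$), which contains no number of the form $\gamma^{4j}+\gamma^{-4j}$, $j\in\mathbb Z$. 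In either case the final arc cannot reach (\ref{final_curve}) — a contradiction. Hence every extremal is normal.

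The conceptual crux is short: the identity $\det(f,g)=-x_1x_2$ confines the switches of an abnormal extremal to $\{x_2=0\}$. The hard part will be the last step, the phase-plane bookkeeping — one must locate (\ref{final_curve}) precisely within the two one-parameter families of constant-control ovals and verify that the discrete reachable set $\{\gamma^{2k}\}$ of axis crossings misses both the axis abscissae and the energy levels at which a terminal bang arc could touch (\ref{final_curve}). Alternatively, once the switches are on the axis, the transversality condition for the terminal manifold (\ref{final_curve}) together with $H(T)=0$ forces $x_2(T)=0$ in the $u_2$-case, reducing it directly to a final axis crossing, again impossible. The whole argument parallels the analysis of Problem~\ref{problem3} in \cite{Stefanatos16a}.
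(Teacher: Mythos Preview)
Your argument is correct and follows the line the paper intends: the paper's own ``proof'' is nothing more than a pointer to Propositions~1 and~2 of \cite{Stefanatos16a}, where the identical bang-bang argument (via $\dot\Phi=\lambda_1$ and nontriviality of $\lambda$) and the identical reduction of abnormal switches to $\{x_2=0\}$ (via $\det(f,g)=-x_1x_2$) are carried out for the point target $(\gamma,0)$.

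The only place where the present problem is not literally covered by the reference is the last step---showing that an abnormal bang-bang trajectory, whose axis crossings are trapped in $\{\gamma^{2k}\}$, cannot terminate on the \emph{curve} (\ref{final_curve}) rather than at the point $(\gamma,0)$. Your level-set bookkeeping handles this correctly: the $u_1$-levels through $\gamma^{2j}$ are all $\ge 1+u_1>2/r_E$, and the $u_2$-levels through $\gamma^{2j}$ take the values $\gamma^{4j}+\gamma^{-4j}\in\{2\}\cup[\gamma^4+\gamma^{-4},\infty)$, while the $u_2$-levels realized on the target form the interval $[\rho_-^2+\rho_-^{-2},\rho_+^2+\rho_+^{-2}]\subset(2,\gamma^4+\gamma^{-4})$ because $1<\rho_-<\gamma<\rho_+<\gamma^2$ under the standing hypothesis $E_{min}<E_f<E_{sc}$. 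The alternative you sketch at the end (using transversality plus $H(T)=0$ to force $x_2(T)=0$ on a terminal $Y$-arc) is also valid and slightly quicker. Either way, this is exactly the ``analogous'' extension the paper has in mind.
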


\begin{proof}
Analogous to the proof of Propositions 1 and 2 in \cite{Stefanatos16a}.
\end{proof}


\begin{definition}
We denote the vector fields corresponding to the constant bang controls
$u_{1}$ and $u_{2}$ by $X=f+u_{1}g$ and $Y=f+u_{2}g$, respectively, and call
the trajectories corresponding to the constant controls $u\equiv u_{1}$ and
$u\equiv u_{2}$ $X$- and $Y$-trajectories. A concatenation of an
$X$-trajectory followed by a $Y$-trajectory is denoted by $XY$ while the
concatenation in the inverse order is denoted by $YX$.
\end{definition}

For normal extremals we can set $\lambda_{0}=-1$.
Then, $H=0$ implies that for any switching time $t_0$, where $\lambda_2(t_0)=-\Phi(t_0)=0$, we must have $\lambda
_{1}(t_0)x_{2}(t_0)=1$. For an $XY$ junction we have $\dot{\Phi}(t_0)=-\dot{\lambda}_2(t_0)=\lambda_{1}(t_0)>0$ and thus necessarily $x_{2}(t_0)>0$. Analogously,
optimal $YX$ junctions need to lie in $\{x_{2}<0\}$. 

In the following proposition we summarize some additional facts about the solution of Problem \ref{problem2}, obtained from the solution of Problem \ref{problem3} in \cite{Stefanatos16a}.

\begin{proposition}\label{summary}
The extremal trajectories have the form $XYX\ldots XY$, with an odd number of switchings. The ratio of the coordinates $(x_2/x_1)$ of consecutive switching points has constant magnitude but alternating sign, while these points are not symmetric with respect to the $x_{1}$-axis. If $s=x_2^2/x_1^2$ is the square of this ratio, which is obviously constant at the switching points, then the times spent on each intermediate $X$ and $Y$ segments of the trajectory (excluding the first $X$ and the last $Y$ segments) are
\begin{align}
\label{switch_X}
\tau_{X}  &  = \frac{1}{2\sqrt{u_{1}}}\cos^{-1}\left(
\frac{s-u_{1}}{s+u_{1}}\right),\\
\label{switch_Y}
\tau_{Y}  &  = \frac{1}{2\sqrt{u_{2}}}\left[  2\pi-\cos^{-1}\left(  \frac
{s-u_{2}}{s+u_{2}}\right)  \right].
\end{align}
\end{proposition}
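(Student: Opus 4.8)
The plan is to reduce Problem~\ref{problem2} to Problem~\ref{problem3}, which was completely solved in \cite{Stefanatos16a}, by noting that the two problems share the same control system, the same control Hamiltonian, the same adjoint equation \eqref{adjoint} and the same switching function $\Phi=-\lambda_2$. Hence everything in the maximum-principle analysis that describes an extremal \emph{along its interior} carries over verbatim: optimal controls are bang-bang (the Proposition proved above) and all extremals are normal; $XY$ junctions lie in $\{x_2>0\}$ and $YX$ junctions in $\{x_2<0\}$ (shown in the paragraph preceding this statement), so $X$- and $Y$-arcs strictly alternate; and the interior switching points of the resulting concatenation obey the same relations as in Problem~\ref{problem3}. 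The only difference is the terminal condition — the fixed point $(\gamma,0)$ there versus the curve \eqref{final_curve} here — which affects only the first and the last arc, exactly the ``excluding the first $X$ and the last $Y$'' proviso in the statement.

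First I would fix the global form $XYX\cdots XY$. The first arc is an $X$-arc, because $Y(1,0)=f(1,0)+u_2g(1,0)=(0,1-u_2)=(0,0)$: the initial point is an equilibrium of the $Y$-flow, so a trajectory starting on $u\equiv u_2$ stays frozen and only wastes time. The last arc is a $Y$-arc, because the target curve \eqref{final_curve} is precisely the level set $\{E_X=2/r_E\}$ of the first integral $E_X:=x_2^2+u_1x_1^2+1/x_1^2$ of the $X$-subsystem, and $E_X$ is constant along every $X$-arc; hence an $X$-arc can reach \eqref{final_curve} only if it lies on it, i.e. only if the preceding switching point already lies on \eqref{final_curve}, which a time-optimal trajectory with a nondegenerate last arc cannot do. (A switch-free extremal would be the $X$-arc through $(1,0)$, lying on $\{E_X=u_1+1\}$; since $E_f<E_{sc}$ gives $2/r_E<u_1+1$, it misses \eqref{final_curve}, so there is at least one switch.) An alternating concatenation beginning with $X$ and ending with $Y$ has an even number of arcs, hence an odd number of switchings.

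For the switching relations and the arc durations I would pass to the moving frame $\{g,[f,g]\}$, which spans $\mathbb{R}^2$ on $\mathcal{D}$ since $[f,g]=(x_1,-x_2)$ and $\det(g,[f,g])=x_1^2\neq 0$, using the identities $[g,[f,g]]=-2g$ and $[f,[f,g]]=2f$. Setting $\phi_0=\langle\lambda,g\rangle$ and $\phi_1=\langle\lambda,[f,g]\rangle$ one gets $\dot\phi_0=\phi_1$ and, with $H\equiv 0$ and $\lambda_0=-1$, $\dot\phi_1=2-4u\phi_0$, so on each bang arc $\ddot\phi_0+4u\phi_0=2$. At every switch $\phi_0=0$; expanding $f$ in the frame, whose $[f,g]$-component is $x_2/x_1$, and using $\langle\lambda,f\rangle=1$ there gives $(x_2/x_1)\,\phi_1=1$. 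Solving the linear equation for $\phi_0$ over an arc joining two consecutive switches $t_1<t_2$, with $\theta=2\sqrt u\,(t_2-t_1)$, yields $\phi_1(t_1)=-\tan(\theta/2)/\sqrt u$ and $\phi_1(t_2)=-\phi_1(t_1)$; therefore $x_2/x_1=1/\phi_1$ has the same modulus $\sqrt s$ and opposite sign at consecutive switches, so $s=x_2^2/x_1^2$ is constant over all switches with alternating sign. Evaluating the bang first integral $x_2^2+ux_1^2+1/x_1^2$ at $t_1$ and $t_2$ shows that $x_1^2(t_1)$ and $x_1^2(t_2)$ are the two roots of $(s+u)\xi^2-E\xi+1=0$, so their product equals $(s+u)^{-1}$ and they differ, i.e. the two switching points are not reflections across the $x_1$-axis. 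Finally $\tan(\theta/2)=\pm\sqrt{u/s}$ gives $\cos\theta=(s-u)/(s+u)$, and the branch is chosen by the sign of $\phi_1$, hence of $x_2$, at the switch entering the arc: $x_2<0$ on entering an $X$-arc forces $\theta\in(0,\pi)$ and $\tau_X=\frac{1}{2\sqrt{u_1}}\cos^{-1}\left(\frac{s-u_1}{s+u_1}\right)$, while $x_2>0$ on entering a $Y$-arc forces $\theta\in(\pi,2\pi)$ and $\tau_Y=\frac{1}{2\sqrt{u_2}}\left[2\pi-\cos^{-1}\left(\frac{s-u_2}{s+u_2}\right)\right]$, i.e. \eqref{switch_X}--\eqref{switch_Y}. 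This is the computation already carried out for Problem~\ref{problem3} in \cite{Stefanatos16a}.

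The step I expect to be the main obstacle is precisely this choice of branch — that $\tau_Y$ carries the extra $2\pi$ while $\tau_X$ does not. It is not a formality: it requires combining the sign data from the junction conditions ($XY$ in $\{x_2>0\}$, $YX$ in $\{x_2<0\}$) with the phase of the solution of $\ddot\phi_0+4u\phi_0=2$ between consecutive zeros of $\phi_0$, and excluding the degenerate configuration in which the ray $x_2=\pm\sqrt s\,x_1$ is tangent to the connecting bang oval (which would make consecutive switches reflections of each other). The rest — alternation, odd number of switchings, constancy of $|x_2/x_1|$ — is routine once one has recognized that the first arc must be $X$ (equilibrium of the $Y$-flow at $(1,0)$) and the last arc must be $Y$ (the target curve being a level set of $E_X$); these two observations are the only genuinely new input beyond what \cite{Stefanatos16a} already provides for Problem~\ref{problem3}.
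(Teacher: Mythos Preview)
Your proposal is correct and follows essentially the same approach as the paper: the paper's proof argues that the first arc must be $X$ (since $(1,0)$ is an equilibrium of the $Y$-flow) and the last arc must be $Y$ (since $X$-arcs are level sets of $x_2^2+u_1x_1^2+1/x_1^2$ and thus cannot cross the target curve \eqref{final_curve}), and then defers the switching-point ratio property and the formulas \eqref{switch_X}--\eqref{switch_Y} entirely to Lemma~3 and Theorem~2 of \cite{Stefanatos16a}. You make the same two boundary observations and then sketch in detail the moving-frame/adjoint computation that \cite{Stefanatos16a} contains, so your argument is a more self-contained version of the paper's proof rather than a different one.
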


\begin{proof}
Note that the optimal trajectory cannot start with a $Y$-segment, since for $u=u_2=1$ the initial point $(1,0)$ is an equilibrium point for system (\ref{system1}),(\ref{system2}). Also, the optimal trajectory cannot end with an $X$-segment, since these segments have the form $x_2^2+u_1x_1^2+1/x_1^2=c$, where $c$ constant, and they do not intersect the final curve (\ref{final_curve}), which has a similar form, when $c\neq 2/r_E$. Consequently, the extremal trajectories should start with an $X$-segment and end with a $Y$-segment. From this and the bang-bang form of the optimal control we conclude that the extremal trajectories have the form $XYX\ldots XY$, with an odd number of switchings. The property of the coordinates ratio at the switching points is proved in Lemma 3 in \cite{Stefanatos16a}, while the times $\tau_X, \tau_Y$ as functions of the ratio $s$ are taken from Theorem 2 in \cite{Stefanatos16a}.
\end{proof}

Up to now we have presented the characheristics of the optimal solution which are common in both Problems \ref{problem2} and \ref{problem3}. But the adjoint vector $\lambda$ for Problem \ref{problem2} should additionally satisfy the transversality conditions at the final time $t=T$, which state that the vector $\lambda(T)$ should be orthogonal to the tangent vector of the curve (\ref{final_curve}) at the final point \cite{Pontryagin}. In the following proposition, which is the main technical point in this paper, we use the transversality conditions to express the time spent on the final $Y$-segment as a function of the ratio $s$.

\begin{proposition}\label{transvers}
Let $P=(x_1,x_2)$ be the last switching point, $s=x_2^2/x_1^2$ the ratio of the squares of the coordinates, and $\tau$ the time to reach from $P$ the final point $F$ on the curve (\ref{final_curve}). Then:
\begin{align}
\label{cosine_tau}
\cos(2\sqrt{u_{2}}\tau)  &  = \frac{-s(u_1+u_2)+u_{2}%
\sqrt{(u_2-u_1)^{2}-4su_1}}{(s+u_{2})(u_2-u_1)},\\
\label{sine_tau}
\sin(2\sqrt{u_{2}}\tau)  &  = \frac{\sqrt{su_2}\left[u_1+u_2+
\sqrt{(u_2-u_1)^{2}-4su_1}\right]}{(s+u_{2})(u_2-u_1)}.
\end{align}
\end{proposition}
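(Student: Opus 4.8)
The plan is to extract two scalar relations, one at each end of the final $Y$-segment, and close them with $\sin^{2}+\cos^{2}=1$. First I would record the junction data at the last switching point $P=(x_{1},x_{2})$. Since the extremal is normal and $H\equiv0$, at the switching time $t_{P}$ (where $\lambda_{2}(t_{P})=0$) we have $\lambda_{1}(t_{P})x_{2}=1$, so $\lambda_{1}(t_{P})=1/x_{2}$; and because the extremal has the form $XY\ldots XY$ (Proposition \ref{summary}), the last junction is of $XY$-type, so $x_{2}>0$ and $x_{2}/x_{1}=+\sqrt{s}$. It is convenient to follow $\Psi:=\langle\lambda,g\rangle=-\lambda_{2}x_{1}=x_{1}\Phi$, which has the same sign and the same zeros as the paper's switching function $\Phi=-\lambda_{2}$ because $x_{1}>0$: then $\Psi(t_{P})=0$, and since a direct computation from (\ref{adjoint}) and (\ref{system1})--(\ref{system2}) gives $\dot{\Psi}=\lambda_{1}x_{1}-\lambda_{2}x_{2}$, we get $\dot{\Psi}(t_{P})=\lambda_{1}(t_{P})x_{1}=x_{1}/x_{2}=1/\sqrt{s}$.

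Next I would show that along the final arc, where $u\equiv u_{2}$, the function $\Psi$ solves the constant-coefficient equation $\ddot{\Psi}+4u_{2}\Psi=2$; this follows by differentiating $\dot{\Psi}=\lambda_{1}x_{1}-\lambda_{2}x_{2}$ once more and using $H\equiv0$ to replace $\langle\lambda,f\rangle$ by $1-u_{2}\Psi$ (equivalently, it is the identity $\ddot{\Psi}=\langle\lambda,[Y,[f,g]]\rangle$ combined with $[f,[f,g]]=2f$ and $[g,[f,g]]=-2g$). Integrating with the data above yields, for $\tau=t-t_{P}$,
\[
\Psi(\tau)=\frac{1}{2u_{2}}\bigl(1-\cos(2\sqrt{u_{2}}\tau)\bigr)+\frac{1}{2\sqrt{u_{2}s}}\sin(2\sqrt{u_{2}}\tau).
\]

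The second relation comes from transversality at $F$. The curve (\ref{final_curve}) is a level set of $x_{2}^{2}+u_{1}x_{1}^{2}+x_{1}^{-2}$, which is exactly the constant of motion of the flow of $X=f+u_{1}g$; hence its tangent line at $F$ is spanned by $X(F)$, and transversality reads $\langle\lambda(T),X(F)\rangle=0$. Subtracting this from the identity $\langle\lambda(T),f(F)+u_{2}g(F)\rangle=1$ furnished by $H\equiv0$ at $t=T$, we obtain $(u_{2}-u_{1})\langle\lambda(T),g(F)\rangle=1$, that is $\Psi(T)=1/(u_{2}-u_{1})$.

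To conclude, set $c=\cos(2\sqrt{u_{2}}\tau)$ and $\sigma=\sin(2\sqrt{u_{2}}\tau)$ with $\tau=T-t_{P}$. Equating the displayed formula for $\Psi$ with $1/(u_{2}-u_{1})$ gives the linear relation $\sigma=\sqrt{s/u_{2}}\,\bigl(c+(u_{1}+u_{2})/(u_{2}-u_{1})\bigr)$; inserting it into $c^{2}+\sigma^{2}=1$ produces a quadratic for $c$ whose discriminant, after using $(u_{1}+u_{2})^{2}-(u_{2}-u_{1})^{2}=4u_{1}u_{2}$, collapses to $u_{2}^{2}\bigl[(u_{2}-u_{1})^{2}-4su_{1}\bigr]/(u_{2}-u_{1})^{2}$ --- the origin of the radical in the statement. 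Selecting the root compatible with minimum time (equivalently, with the limiting regime $s\to0$, where $\tau\to0$ and $c\to1$) gives the $+$ sign and hence (\ref{cosine_tau}), and substituting back into the linear relation gives (\ref{sine_tau}). The only genuinely non-mechanical steps are the two structural observations that linearize the problem --- that the natural quantity to propagate on a bang arc is $\langle\lambda,g\rangle=-\lambda_{2}x_{1}$, solving $\ddot{\Psi}+4u_{2}\Psi=2$ with initial data that encodes $s$, and that (\ref{final_curve}) is an $X$-level set so transversality degenerates to $\Psi(T)=1/(u_{2}-u_{1})$ --- plus a short argument fixing the branch of the square root.
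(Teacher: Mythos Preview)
Your argument is correct and arrives at exactly the same linear relation
\[
\sin(2\sqrt{u_{2}}\tau)=\sqrt{\tfrac{s}{u_{2}}}\Bigl(\cos(2\sqrt{u_{2}}\tau)+\tfrac{u_{1}+u_{2}}{u_{2}-u_{1}}\Bigr)
\]
that the paper obtains, after which both finish with $\sin^{2}+\cos^{2}=1$ and a branch choice. The route, however, is different. The paper uses the conjugate-point formalism: it pulls back the vector $X(F)$ along the $Y$-flow via $e^{\tau\,\mathrm{ad}\,Y}(X)$, summed in closed form from the bracket relations $[f,[f,g]]=2f$, $[g,[f,g]]=-2g$, and then imposes the parallelism $g(P)\parallel (e^{-\tau Y})_{*}X(F)$ forced by $\langle\lambda(0),g(P)\rangle=0$ and $\langle\lambda(0),w(0)\rangle=0$. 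You instead propagate the scalar $\Psi=\langle\lambda,g\rangle=-\lambda_{2}x_{1}$ forward, showing it obeys the inhomogeneous oscillator equation $\ddot{\Psi}+4u_{2}\Psi=2$ on the $Y$-arc, with initial data $\Psi(0)=0$, $\dot{\Psi}(0)=1/\sqrt{s}$ coming from the $XY$-junction, and terminal value $\Psi(\tau)=1/(u_{2}-u_{1})$ coming from transversality combined with $H\equiv0$. The two computations are dual: the paper moves vector fields by the adjoint action, you move covectors by the Hamiltonian flow; both rest on the same finite-dimensional Lie algebra. Your version is somewhat more elementary---it avoids the $e^{\mathrm{ad}\,Y}$ expansion and the explicit parallelism argument---while the paper's version makes the geometric structure (conjugate points for bang-bang extremals) more visible and generalizes more transparently to other switching-time computations in the same problem.
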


\begin{proof}
The method that we will use to obtain the above formulas is similar to the one we used in \cite{Stefanatos11,Stefanatos16a}
to obtain the interswitching times (\ref{switch_X}) and (\ref{switch_Y}), which
was based on the concept of ``conjugate point" for bang-bang controls \cite{Suplane,Boscain04}.
Without loss of generality assume that the trajectory passes through $P$ at
time $0$ and is at $F$ at time $\tau$. First of all note that, since $P$ is a switching point,
the corresponding multiplier vanishes against the control vector field $g$ at
this point, i.e., $\langle\lambda(0),g(P)\rangle=0$. Next, observe that the tangent vector to the final curve (\ref{final_curve}) coincides with the vector field $X=f+u_1g$ evaluated at the points of the curve. According to the transversality conditions, the multiplier at the final time should vanish against this vector field at the final point $F$, i.e. $\langle\lambda(\tau),X(F)\rangle=0$. We need to compute what this last relation implies at time $0$.
In order to do so, we move the vector $X(F)$ along the last $Y$-segment backward from $F$ to $P$. This is
done by means of the solution $w(t)$ of the variational equation along the
$Y$-trajectory with terminal condition $w(\tau)=X(F)$ at time $\tau$. Recall
that the variational equation along $Y$ is the linear system $\dot{w}=Aw$
where matrix $A$ is given in (\ref{adjoint}). Symbolically, if we denote by
$e^{tY}(P)$ the value of the $Y$-trajectory at time $t$ that starts at
point $P$ at time $0$ and by $(e^{-tY})_{\ast}$ the backward evolution under
the linear differential equation $\dot{w}=Aw$, then we can represent this
solution in the form
\begin{align}
w(0)&=(e^{-\tau Y})_{\ast}w(\tau)=(e^{-\tau Y})_{\ast}X(F)\nonumber\\
&=(e^{-\tau Y})_{\ast}X(e^{\tau Y}(P))=(e^{-\tau Y})_{\ast}\circ X\circ e^{\tau Y}(P).\nonumber
\end{align}
Since the
\textquotedblleft adjoint equation\textquotedblright\ of the maximum principle
is precisely the adjoint equation to the variational equation, it follows that
the function $t\mapsto\langle\lambda(t),w(t)\rangle$ is constant along the
$Y$-trajectory. Hence $\langle\lambda(\tau),X(F)\rangle=0$ implies that
\[
\langle\lambda(0),w(0)\rangle=\langle\lambda(0),(e^{-\tau Y})_{\ast}X(e^{\tau
Y}(P))\rangle=0
\]
as well. But the non-zero two-dimensional multiplier $\lambda(0)$ can only be orthogonal to
both $g(P)$ and $w(0)$ if these vectors are parallel, $g(P)\Vert
w(0)=(e^{-\tau Y})_{\ast}X(e^{\tau Y}(P))$. It is this relation that defines
the time $\tau$ spent on the last $Y$-segment.

It remains to compute $w(0)$. For this we make use of the well-known relation \cite{Heinz12}
\begin{equation*}
(e^{-\tau Y})_{\ast}\circ X\circ e^{\tau Y}=e^{\tau\,adY}(X)
\end{equation*}
where the operator $adY$ is defined as $adY(X)=[Y,X]$, with $[,]$ denoting the
Lie bracket of the vector fields $Y$ and $X$. 
For our system, the Lie algebra 
generated by the fields $f$ and $g$ actually is finite dimensional: we have
\[
\lbrack f,g](x)=\left(
\begin{array}
[c]{c}%
x_{1}\\
-x_{2}%
\end{array}
\right)
\]
and the relations
\[
\lbrack f,[f,g]]=2f,\qquad\lbrack g,[f,g]]=-2g
\]
can be directly verified. Using these relations and the analyticity of the
system, $e^{t\,adY}(X)$ can be calculated in closed form from the expansion
\begin{equation*}
e^{t\,adY}(X)=\sum_{n=0}^{\infty}\frac{t^{n}}{n!}\,ad\,^{n}Y(X),
\end{equation*}
where, inductively, $ad^{n}Y(X)=[Y,ad^{n-1}Y(X)]$.
It is not hard to show that for $n=0,1,2,\ldots$, we have that
\[
ad\,^{2n+1}Y(X)=-(u_2-u_1)(-4u_{2})^{n}[f,g]
\]
and
\[
ad\,^{2n+2}Y(X)=-2(u_2-u_1)(-4u_{2})^{n}(f-u_{2}g),
\]
so that
\begin{multline*}
e^{t\,adY}(X)=\\X-(u_2-u_1)\left\{\sum_{n=0}^{\infty}\frac{t^{2n+1}}{(2n+1)!}\,(-4u_{2}%
)^{n}[f,g]+\sum_{n=0}^{\infty}\frac{2t^{2n+2}}{(2n+2)!}\,(-4u_{2})^{n}(f-u_{2}g)\right\}.
\end{multline*}
By summing the series appropriately we obtain
\begin{multline*}
e^{t\,adY}(X)=\\X-(u_2-u_1)\left\{\frac{1}{2\sqrt{u_{2}}}\sin(2\sqrt{u_{2}}t)[f,g]+\frac
{1}{2u_{2}}[1-\cos(2\sqrt{u_{2}}t)](f-u_{2}g)\right\}.
\end{multline*}
The field $w(0)=(e^{-\tau Y})_{\ast}X(e^{\tau Y}(P))=e^{t\,adY}(X(P))$ is
parallel to $g(P)=(0,-x_{1})^{T}$ if and only if
\begin{equation*}
x_2-(u_2-u_1)\left\{\frac{x_1}{2\sqrt{u_{2}}}\sin(2\sqrt{u_{2}}\tau)+\frac{x_{2}}{2u_2}\left[  1-\cos(2\sqrt{u_{2}}\tau)\right]\right\}=0.
\end{equation*}
Hence
\begin{equation*}
\sin(2\sqrt{u_{2}}\tau)=\frac{x_{2}}{\sqrt{u_{2}}x_{1}}\left[\frac{u_2+u_1}{u_2-u_1}+\cos(2\sqrt{u_{2}}\tau)\right]=\sqrt{\frac{s}{u_2}}\left[\frac{u_2+u_1}{u_2-u_1}+\cos(2\sqrt{u_{2}}\tau)\right],
\end{equation*}
where the last equality follows from the fact that for the last switching point it is $x_2>0$, thus $x_2/x_1=\sqrt{s}$.
Using the above equation, the expressions (\ref{cosine_tau}) and (\ref{sine_tau}) can be easily obtained. 
\end{proof}

In the following theorem, we combine Propositions \ref{summary} and \ref{transvers} to obtain a transcendental equation for the ratio $s$ and an expression for the total time to reach the final curve along the extremal trajectories.

\begin{theorem}
\label{prop:time}
The extremal trajectories have the form $XYX\ldots XY$, with an odd number of switchings. The necessary time to reach the
target curve (\ref{final_curve}) with $2n+1$ switchings, $n=0,1,2,\ldots$, is
\begin{equation}
\label{time_odd}
T^{\pm}_{2n+1}=\tau_{\pm}+n(\tau_{X}+\tau_{Y})+\tau,
\end{equation}
where
\begin{align}
\label{time_in}\tau_{\pm}  &  = \frac{1}{2\sqrt{u_{1}}}\cos^{-1}\left[
\frac{sc_{1}\mp u_{1}\sqrt{c_{1}^{2}-4(s+u_{1})}}{(s+u_{1})\sqrt{c_{1}%
^{2}-4u_{1}}}\right],\\
\label{time_fi}
\tau  &  = \frac{1}{2\sqrt{u_{2}}}\cos^{-1}\left[ \frac{-s(u_1+u_2)+u_{2}%
\sqrt{(u_2-u_1)^{2}-4su_1}}{(s+u_{2})(u_2-u_1)} \right]  ,
\end{align}
the interswitching times $\tau_X, \tau_Y$ are given in (\ref{switch_X}), (\ref{switch_Y}), respectively, while the constant
\begin{equation}\label{c1}
c_1=1+u_1
\end{equation}
characterizes the first $X$-segment of the trajectory. The ratio $s$ of the square of coordinates at the switching points is the solution in the interval $0<s\leq (1-u_{1})^{2}/4$ of the following transcendental equation
\begin{equation}
\label{transcendental}
y(\kappa_{\pm})=y(\kappa_{2n+1,\pm})\cos(2\sqrt{u_{2}}\tau)+\sqrt{1-y^2(\kappa_{2n+1,\pm})}\sin(2\sqrt{u_{2}}\tau),
\end{equation}
where $\cos(2\sqrt{u_{2}}\tau),\sin(2\sqrt{u_{2}}\tau)$ are given in (\ref{cosine_tau}), (\ref{sine_tau}) as functions of $s$, while
\begin{equation}\label{y}
y(\kappa_{2n+1,\pm})=\frac{2u_2\kappa^2_{2n+1,\pm}-c_{\pm}}{\sqrt{c^2_{\pm}-4u_2}},\quad y(\kappa_{\pm})=\frac{2u_2\kappa^2_{\pm}-c_{\pm}}{\sqrt{c^2_{\pm}-4u_2}},
\end{equation}
\begin{equation}\label{kappa}
\kappa^2_{2n+1,\pm}=\left(\frac{s+u_2}{s+u_1}\right)^n\frac{c_1\pm\sqrt{c_1^2-4(s+u_1)}}{2(s+u_1)},\quad \kappa^2_{\pm}=\frac{c_{\pm}r_E-2}{r_E(u_2-u_1)},
\end{equation}
and
\begin{equation}\label{c_const}
c_{\pm}=(s+u_2)\kappa^2_{2n+1,\pm}+\frac{1}{\kappa^2_{2n+1,\pm}}.
\end{equation}
The ratio between the initial and final energies $r_E=E_0/E_f$ characterizes the final curve (\ref{final_curve}).
Note that the $\pm$ sign in (\ref{transcendental}) corresponds to the $\pm$ sign in (\ref{time_odd}).
\end{theorem}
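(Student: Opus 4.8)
The plan is to prove the theorem by combining Propositions~\ref{summary} and~\ref{transvers} with an explicit integration of the flow along the first $X$-segment and along the last $Y$-segment. The tool for both integrations is the elementary observation that, along any trajectory of (\ref{system1})--(\ref{system2}) on which $u$ is constant, the variable $\xi=x_1^2$ satisfies a forced linear oscillator equation: differentiating $\xi$ twice and eliminating $x_2^2$ via the energy integral $x_2^2+ux_1^2+1/x_1^2=E$ gives $\ddot{\xi}+4u\xi=2E$, so that $(2u\xi-E)/\sqrt{E^2-4u}=\cos(2\sqrt{u}\,t+\varphi)$ for a suitable constant phase $\varphi$. For $u=u_2$ the left-hand side is exactly the function $y$ of (\ref{y}), and it is this identity, together with the $XYX\ldots XY$ structure with an odd number of switchings and the fact that $x_2^2/x_1^2=s$ at every switching point (both from Proposition~\ref{summary}), that will generate all the transcendental relations in the statement.

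First I would pin down the geometric quantities. The first $X$-segment passes through $(1,0)$, so its energy equals $c_1=u_1+1$, which also yields $\sqrt{c_1^2-4u_1}=1-u_1$; intersecting this level set with $x_2^2=s x_1^2$ produces the biquadratic $(s+u_1)x_1^4-c_1 x_1^2+1=0$, whose two roots are the candidate first switching abscissae $x_1^2=\kappa_{1,\pm}^2$, reproducing the $n=0$ case of (\ref{kappa}) and forcing $0<s\le(1-u_1)^2/4$ for reality (this is the origin of the two families $\pm$). Since two consecutive switching points on an $X$- (respectively $Y$-) segment are the two roots of a quadratic with product $1/(s+u_1)$ (respectively $1/(s+u_2)$), propagating from the first switching point across the $n$ intermediate $YX$ blocks multiplies $x_1^2$ by $(s+u_2)/(s+u_1)$ each time, which gives the general $\kappa_{2n+1,\pm}^2$ in (\ref{kappa}). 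Evaluating the $Y$-energy at the last switching point, where $x_2^2=s\kappa_{2n+1,\pm}^2$, yields $c_\pm$ as in (\ref{c_const}); subtracting the target relation (\ref{final_curve}) from that $Y$-energy relation identifies the abscissa of the final point as $x_1^2=\kappa_\pm^2=(c_\pm r_E-2)/(r_E(u_2-u_1))$, the second formula in (\ref{kappa}).

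The time formulas then drop out of the oscillator identity. On the first $X$-segment the motion starts at the turning point $(1,0)$, so $\varphi=\pi$ and $\cos(2\sqrt{u_1}t)=(c_1-2u_1 x_1^2)/\sqrt{c_1^2-4u_1}$; evaluating at $x_1^2=\kappa_{1,\pm}^2$ and using $\sqrt{c_1^2-4u_1}=1-u_1$ gives (\ref{time_in}). The intermediate times $\tau_X,\tau_Y$ are quoted verbatim from Proposition~\ref{summary} and the final $Y$-time $\tau$ is the transversality time of Proposition~\ref{transvers}, i.e.\ (\ref{time_fi}); since an $XYX\ldots XY$ trajectory with $2n+1$ switchings consists of the first $X$-segment, $n$ intermediate $X$-segments, $n$ intermediate $Y$-segments and the final $Y$-segment, summing their durations gives (\ref{time_odd}). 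Finally I would apply the oscillator identity on the last $Y$-segment with $t=0$ placed at the last switching point: there $x_2>0$, hence $\dot{\xi}=2x_1x_2>0$, which forces $\cos\varphi=y(\kappa_{2n+1,\pm})$ and $\sin\varphi=-\sqrt{1-y^2(\kappa_{2n+1,\pm})}$, while at the final point $x_1^2=\kappa_\pm^2$, so $y$ evaluated there equals $y(\kappa_\pm)$. Expanding $\cos(2\sqrt{u_2}\tau+\varphi)=\cos(2\sqrt{u_2}\tau)\cos\varphi-\sin(2\sqrt{u_2}\tau)\sin\varphi$ and inserting the expressions (\ref{cosine_tau})--(\ref{sine_tau}) for $\cos(2\sqrt{u_2}\tau)$ and $\sin(2\sqrt{u_2}\tau)$ as functions of $s$ yields exactly the transcendental equation (\ref{transcendental}), the $\pm$ being inherited from the choice of $\kappa_{1,\pm}^2$.

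I expect the main obstacle to be bookkeeping rather than a conceptual difficulty: one must propagate the oscillator phase consistently through the whole concatenation $XYX\ldots XY$ and keep track of which root $\kappa_{1,\pm}^2$, and hence which branch of each square root, is attached to $T^{\pm}_{2n+1}$. The delicate sign points are that $\sin\varphi<0$ at the last switching point (so the negative branch is the correct one in the formula for $\sin\varphi$) and that $\sin(2\sqrt{u_2}\tau)>0$ while the bracketed arguments of the inverse cosines in (\ref{time_in}) and (\ref{time_fi}) lie in $[-1,1]$, so that the single-valued $\cos^{-1}$ is legitimate; a minor additional check is $c_\pm^2-4u_2>0$, which holds because the last $Y$-segment is traversed in positive time.
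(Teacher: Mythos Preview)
Your proposal is correct and follows essentially the same route as the paper: the geometric identifications of $\kappa_{1,\pm}^2$, the recursion $\kappa_{j+1}^2\kappa_j^2=1/(s+u)$ giving $\kappa_{2n+1,\pm}^2$, the formula for $\kappa_\pm^2$ by subtracting the target relation from the $Y$-energy, and the use of Proposition~\ref{transvers} for the final time all match the paper's argument exactly. The only cosmetic difference is that where the paper integrates $\dot{x}_1=\sqrt{-u x_1^4+Ex_1^2-1}/x_1$ via the substitution $y=(2ux_1^2-E)/\sqrt{E^2-4u}$, you package the same computation as the explicit solution of the forced linear oscillator $\ddot{\xi}+4u\xi=2E$ for $\xi=x_1^2$; this is the same change of variables and leads to the same phase relation, and your sign analysis ($\sin\varphi<0$ at the last switching point because $\dot{\xi}>0$ there) is the counterpart of the paper's choice of branch in (\ref{int_final}).
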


\begin{proof}
Consider an extremal trajectory $XYX\ldots XY$ with $2n+1$ switching points $P_j(\kappa_{j},\mu_{j}), j=1,2,\ldots,2n+1$, and final point $F(\kappa,\mu)$ on the curve (\ref{final_curve}), shown in Fig. \ref{fig:proof}. The $X$-segments of the trajectory ($u=u_1$) are depicted with blue solid line and the $Y$-segments ($u=u_2$) with red dashed line, while the final curve is the black solid line where the trajectory terminates. Observe that the odd-numbered switching points lie on a positive-slope straight line passing through the origin, while the even-numbered switching points lie on the symmetric line with opposite slope, in accordance with Proposition \ref{summary}. We will express in terms of the ratio $s$ the $x_1$-coordinates of the last switching point $P_{2n+1}(\kappa_{2n+1},\mu_{2n+1})$ and of the final point $F(\kappa,\mu)$, and then we will connect them by integrating the equations of motion along the last $Y$-segment for time $\tau$ given in Proposition \ref{transvers}.
\begin{figure}[htbp]
  \centering
  \includegraphics[width=0.5\linewidth]{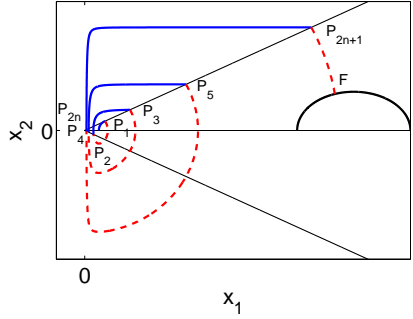}
  \caption{Extremal trajectory $XYX\ldots XY$ with $2n+1$ switchings. Blue solid line corresponds to $X$-segments ($u=u_1$) and red dashed line corresponds to $Y$-segments ($u=u_2$).}
   \label{fig:proof}
\end{figure}

Two consecutive switching points satisfy the following equation
\begin{equation}
\label{connection}
\mu_{j+1}^{2}+u\kappa_{j+1}^{2}+\frac{1}{\kappa_{j+1}^{2}}=\mu_{j}^{2}+u\kappa_{j}^{2}+\frac{1}{\kappa_{j}^{2}},
\end{equation}
where $u=u_1$ if the two points are connected with an $X$-segment and $u=u_2$ if they are joined with a $Y$-segment (it can be verified from the system equations that the quantity $x_2^2+ux_1^2+1/x_1^2$ is constant along segments with constant control $u$).
The \emph{ratio of the squares} of the coordinates of all the switching points is constant and equal to $s$, thus $\mu_{j+1}^{2}/\kappa_{j+1}^{2}=\mu_{j}^{2}/\kappa_{j}^{2}=s$ and (\ref{connection}) becomes
\begin{equation*}
(\kappa_{j+1}^{2}-\kappa_{j}^{2})\left(s+u-\frac{1}{\kappa_{j}^{2}\kappa_{j+1}^{2}}\right)=0.
\end{equation*}
But $\kappa_{j+1}\neq\kappa_{j}$ since the consecutive switching points are not symmetric with respect to $x_1$-axis (Proposition \ref{summary}), thus
\begin{equation}
\label{consecutive}
\kappa_{j+1}^{2}=\frac{1}{\kappa_{j}^{2}(s+u)}.
\end{equation}
If we consecutively apply (\ref{consecutive}) from the first switching point up to the last, we obtain
\begin{equation}
\label{X_first_last}
\frac{\kappa_{2n+1}^2}{\kappa_{1}^2}=\left(\frac{s+u_2}{s+u_1}\right)^n.
\end{equation}
Since the first switching point $P_1(\kappa_1,\mu_1)$ belongs to the first $X$-segment starting from $(1,0)$, it satisfies the equation
\begin{equation*}
\mu_{1}^{2}+u_1\kappa_{1}^{2}+\frac{1}{\kappa_{1}^{2}}=c_1\Rightarrow(s+u_1)\kappa_1^4-c_1\kappa_1^2+1=0,
\end{equation*}
where $c_1=u_1+1$. Solving for $\kappa_1^2$ we obtain
\begin{equation}
\label{k1}
\kappa_{1,\pm}^{2}=\frac{c_{1}\pm\sqrt{c_{1}^{2}-4(s+u_{1})}}{2(s+u_{1})}.
\end{equation}
If we plug (\ref{k1}) in (\ref{X_first_last}), we find the expression (\ref{kappa}) of $\kappa^2_{2n+1,\pm}$ in terms of the ratio $s$, where $\pm$ correspond to the $\pm$ sign in (\ref{k1}).

We next move to find the expression in terms of $s$ for the $x_1$-coordinate of the final point $F(\kappa,\mu)$. This point belongs to the last $Y$-segment starting from $P_{2n+1}(\kappa_{2n+1},\mu_{2n+1})$, thus its coordinates satisfy the following equation
\begin{align}\label{Y_last}
\mu^{2}+u_2\kappa^{2}+\frac{1}{\kappa^{2}}
&=\mu_{2n+1,\pm}^{2}+u_2\kappa_{2n+1,\pm}^{2}+\frac{1}{\kappa_{2n+1,\pm}^{2}}\nonumber\\
&=(s+u_2)\kappa_{2n+1,\pm}^{2}+\frac{1}{\kappa_{2n+1,\pm}^{2}}\nonumber\\
&=c_{\pm},
\end{align}
where $c_{\pm}$ is defined in (\ref{c_const}).
But $F$ is also a point of the final curve (\ref{final_curve}), thus
\begin{equation}
\label{final_point}
\mu^{2}+u_1\kappa^{2}+\frac{1}{\kappa^{2}}=\frac{2}{r_E}.
\end{equation}
By subtracting (\ref{final_point}) from (\ref{Y_last}), we easily obtain the expression (\ref{kappa}) for $\kappa^2_{\pm}$, where $\pm$ correspond to the $\pm$ sign in (\ref{k1}).

Now we are in a position to connect points $P_{2n+1},F$ by integrating the system equations along the last $Y$-segment of the trajectory. The points $(x_1,x_2)$ of this segment satisfy the equation
\begin{equation}
x_2^2+u_2x_1^2+\frac{1}{x_1^2}=c_{\pm},\nonumber
\end{equation}
where $c_{\pm}$ is given in (\ref{c_const}). The last $Y$-segment lies on the upper quadrant $x_2>0$, thus, from the above equation and (\ref{system1}) we have
\begin{equation}
\dot{x}_1=x_{2}=\frac{\sqrt{-u_{2}x_{1}^{4}+c_{\pm}x_{1}^{2}-1}}{x_{1}}.\nonumber
\end{equation}
If we make the change of variables
\begin{equation}
y=\frac{2u_{2}x_{1}^{2}-c_{\pm}}{\sqrt{c^{2}_{\pm}-4u_{2}}}\nonumber
\end{equation}
we obtain
\begin{equation}
-\frac{dy}{\sqrt{y^{2}-1}}=-2\sqrt{u_{2}}dt.\nonumber
\end{equation}
By integrating the last equation from $t=0$ (point $P_{2n+1}$) to $t=\tau$ (point $F$), where $\tau$ is the time from Proposition \ref{transvers} given in (\ref{time_fi}), we find
\begin{equation}
\label{int_final}
\cos^{-1}y(\kappa_{\pm})-\cos^{-1}y(\kappa_{2n+1,\pm})=-2\sqrt{u_{2}}\tau,
\end{equation}
where $y(\kappa_{\pm}),y(\kappa_{2n+1,\pm})$ are given in (\ref{y}). If we move $\cos^{-1}y(\kappa_{2n+1,\pm})$ to the right and then take the $\cos$ of both sides, we obtain (\ref{transcendental}). Note that this is a transcendental equation for the ratio $s\geq 0$, since all the terms involved are expressed as functions of this ratio. In order to find the range of $s$, we require the nonnegativity of the quantities under the square roots in (\ref{kappa}), (\ref{time_fi}) and we obtain $0<s\leq\mbox{Min}\{(1-u_{1})^{2}/4, (u_{2}-u_1)^{2}/4u_1\}$, where the value $s=0$ is excluded since the switching points do not lie on the $x_1$-axis. But $u_2=1$ and $u_1=1/\gamma^4<1$, thus $(u_{2}-u_1)^{2}/4u_1=(1-u_1)^{2}/4u_1\geq (1-u_{1})^{2}/4$ and finally $0<s\leq (1-u_{1})^{2}/4$.

Having found the ratio $s$, it is not hard to find the duration of the trajectory. An extremal with $2n+1$ switchings contains $n$ ``turns", where each ``turn" consists of one $X$ and one $Y$ intermediate segments, with durations $\tau_X,\tau_Y$ given in (\ref{switch_X}), (\ref{switch_Y}), respectively. The total duration of the trajectory is given by (\ref{time_odd}), where $\tau_{\pm}$ is the time spent on the first $X$-segment and $\tau$ is the time spent on the final $Y$-segment. From (\ref{cosine_tau}) we can easily obtain the expression (\ref{time_fi}) for $\tau$. It remains to calculate $\tau_{\pm}$. If we integrate the equations of motion from the starting point $(1,0)$ to the first switching point $P_{1}(\kappa_{1,\pm},\mu_{1,\pm})$ we find, similarly to (\ref{int_final})
\begin{equation}
\label{int_initial}
\cos^{-1}\tilde{y}(\kappa_{1,\pm})-\cos^{-1}\tilde{y}(1)=-2\sqrt{u_{1}}\tau_{\pm},
\end{equation}
where
\begin{align}
\label{first_switch}\tilde{y}(\kappa_{1,\pm})&=\frac{2u_{1}\kappa_{1,\pm}^{2}-c_{1}}{\sqrt{c^{2}_{1}-4u_{1}}}=\frac{-sc_{1}\pm u_{1}\sqrt{c_{1}^{2}-4(s+u_{1})}}{(s+u_{1})\sqrt{c_{1}^{2}-4u_{1}}},\\
\label{start}\tilde{y}(1)&=\frac{2u_{1}-c_{1}}{\sqrt{c^{2}_{1}-4u_{1}}}=-1.
\end{align}
Note that in the last two equations we used the expressions (\ref{k1}) for $\kappa_{1,\pm}^2$ and (\ref{c1}) for $c_1$. From (\ref{start}) we have $\cos^{-1}\tilde{y}(1)=\pi$, thus (\ref{int_initial}) becomes
\begin{equation*}
\cos^{-1}\tilde{y}(\kappa_{1,\pm})=\pi-2\sqrt{u_{1}}\tau_{\pm}\Rightarrow \tilde{y}(\kappa_{1,\pm})=-\cos2\sqrt{u_1}\tau_{\pm}.
\end{equation*}
From the last relation and (\ref{first_switch}) we obtain the expression (\ref{time_in}) for $\tau_{\pm}$.
\end{proof}

We can test the transcendental equation (\ref{transcendental}) of Theorem \ref{prop:time} by examining the limiting case $r_E\rightarrow\gamma^2$ $(E_f\rightarrow E_{min})$, where the final curve (\ref{final_curve}) shrinks to the point $(\gamma,0)$ on the $x_1$-axis. Instead of testing directly (\ref{transcendental}), it is actually easier to check (\ref{int_final}), from which the transcendental equation is derived. Since the final point is now $(\gamma,0)$, it is $\kappa_{\pm}=\gamma$ and the constant characterizing the last $Y$-segment becomes
\begin{equation*}
c=c_{\pm}=u_2\gamma^2+\frac{1}{\gamma^2}.
\end{equation*}
Using this expression in (\ref{y}) we find $y(\gamma)=1$, thus $\cos^{-1}y(\gamma)=0$ and (\ref{int_final}) becomes
\begin{equation*}
y(\kappa_{2n+1,\pm})=\cos(2\sqrt{u_{2}}\tau)=\frac{-sc+u_2\sqrt{c^2-4(s+u_2)}}{(s+u_2)\sqrt{c^2-4u_2}},
\end{equation*}
where the last equation for $\cos(2\sqrt{u_{2}}\tau)$ is obtained from (\ref{cosine_tau}) by making the replacement $u_1=1/\gamma^4$ and then multiplying the numerator and the denominator with $\gamma^2$. If we use in the above equation the expression for $y(\kappa_{2n+1,\pm})$ from (\ref{y}) we find
\begin{equation*}
\kappa_{2n+1,\pm}^2=\frac{c+\sqrt{c^2-4(s+u_2)}}{2(s+u_2)},
\end{equation*}
and if we replace $\kappa_{2n+1,\pm}^2$ with the corresponding expression from (\ref{kappa}), we end up with the transcendental equation
\begin{equation*}
\frac{c+\sqrt{c^{2}-4(s+u_{2})}}{c_{1}\pm\sqrt{c_{1}^{2}-4(s+u_{1})}}=\left(\frac{s+u_{2}}{s+u_{1}}\right)^{n+1}.
\end{equation*}
This is exactly the transcendental equation obtained in \cite{Stefanatos16a} where we solved Problem \ref{problem3}, with the final point fixed on the $x_1$-axis.

\section{Applications in quantum thermodynamics}
\label{sec:applications}

\subsection{Calculation of the minimum driving time for a quantum refrigerator}

We consider a quantum refrigerator based on a parametric harmonic oscillator which is studied in \cite{Abah16}. The frequency of the oscillator determines the spatial extent of the wavefunctions and thus it is analogous to the inverse volume in the classical setting. A frequency increase corresponds to a compression, while a frequency decrease to an expansion. The refrigerator operates between a cold and a hot reservoir and, as its classical counterpart, it consumes work to extract heat from the cold reservoir. In order to achieve this it executes the Otto cycle, consisting of four branches which are depicted in Fig. \ref{fig:refrigerator}: (1) \emph{Isentropic compression} $A\rightarrow B$: initially (state $A$) the oscillator is in thermal equilibrium with the cold reservoir at temperature $\mathcal{T}_c$, with its frequency fixed to the value $\omega_c$. Then, it is isolated from the reservoir and its frequency is increased to $\omega_h$. During this process, work is added to the system while the entropy remains constant. (2) \emph{Hot isochore} $B\rightarrow C$: The frequency is kept fixed to $\omega_h$ while the oscillator is coupled to the hot reservoir and reaches a thermal equilibrium state $C$ at temperature $\mathcal{T}_h$. (3) \emph{Isentropic expansion} $C\rightarrow D$: the frequency is decreased back to the initial value $\omega_c$ at constant entropy. (4) \emph{Cold isochore} $D\rightarrow A$: the system is brought to contact with the cold reservoir and returns to the initial thermal equilibrium state $A$ at temperature $\mathcal{T}_c$.
\begin{figure}[htbp]
  \centering
  \includegraphics[width=0.5\linewidth]{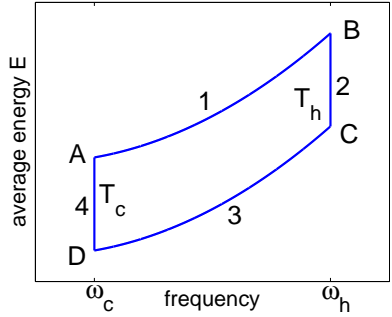}
  \caption{Energy-frequency diagram of a quantum refrigerator executing the Otto cycle.}
  \label{fig:refrigerator}
\end{figure}

The above described heat machine can operate as a refrigerator as long as the heat extracted from the cold reservoir during the fourth step is nonnegative. This heat is equal to the difference between the average energies of states $A$ and $D$, thus
\begin{equation}
\label{ref_condition}
Q_4=E_A-E_D\geq 0\Rightarrow E_A\geq E_D.
\end{equation}
If the frequency of the harmonic oscillator is restricted as
\begin{equation*}
\omega_c\leq\omega(t)\leq\omega_h,
\end{equation*}
then the minimum value of $E_D$ which can be obtained at the end of the third step, starting from the equilibrium value $E_C$, is
\begin{equation}
\label{EDmin}
E_{D,min}=\frac{\omega_c}{\omega_h}E_C,
\end{equation}
according to (\ref{Emin}) with the analogy
\begin{equation}
\label{analogy}
\omega_h\rightarrow\omega_0,\quad\omega_c\rightarrow\omega_f,\quad E_C\rightarrow E_0,\quad E_D\rightarrow E_f.
\end{equation}
As explained in section \ref{sec:formulation}, this minimum value can be achieved if the available time $T$ for the third step of the cycle is larger than a necessary minimum time $T_{min}$, which can be calculated following the procedure described in our recent work \cite{Stefanatos16a}.
The following proposition provides the ordering of $E_A, E_C, E_{D,min}$.
\begin{proposition}
\label{energies}
If $\omega_h/\omega_c>\mathcal{T}_h/\mathcal{T}_c$, then $E_{D,min}<E_A<E_C$
\end{proposition}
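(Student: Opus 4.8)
The plan is to express all three energies through the standard equilibrium average energy of a harmonic oscillator of frequency $\omega$ at temperature $\mathcal{T}$, namely $E(\omega,\mathcal{T})=\frac{\hbar\omega}{2}\coth\left(\frac{\hbar\omega}{2k_B\mathcal{T}}\right)$, and then to use two elementary monotonicity properties that follow from it: for fixed $\mathcal{T}$, $E(\omega,\mathcal{T})$ is strictly increasing in $\omega$ (because $x\mapsto x\coth x$ is strictly increasing on $(0,\infty)$), and for fixed $\omega$, $E(\omega,\mathcal{T})$ is strictly increasing in $\mathcal{T}$ (because $\coth$ is strictly decreasing on $(0,\infty)$). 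First I would record $E_A=E(\omega_c,\mathcal{T}_c)$, $E_C=E(\omega_h,\mathcal{T}_h)$, and, substituting (\ref{EDmin}), $E_{D,min}=\frac{\omega_c}{\omega_h}E_C=\frac{\hbar\omega_c}{2}\coth\left(\frac{\hbar\omega_h}{2k_B\mathcal{T}_h}\right)$.

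The inequality $E_A<E_C$ does not use the hypothesis at all: since the reservoirs satisfy $\mathcal{T}_c<\mathcal{T}_h$ and the compression step raises the frequency, $\omega_c<\omega_h$, monotonicity in each argument gives the chain $E_A=E(\omega_c,\mathcal{T}_c)<E(\omega_c,\mathcal{T}_h)<E(\omega_h,\mathcal{T}_h)=E_C$.

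The inequality $E_{D,min}<E_A$ is where the hypothesis enters. Cancelling the common prefactor $\frac{\hbar\omega_c}{2}$ from $E_{D,min}=\frac{\hbar\omega_c}{2}\coth\left(\frac{\hbar\omega_h}{2k_B\mathcal{T}_h}\right)$ and $E_A=\frac{\hbar\omega_c}{2}\coth\left(\frac{\hbar\omega_c}{2k_B\mathcal{T}_c}\right)$ reduces the claim to $\coth\left(\frac{\hbar\omega_h}{2k_B\mathcal{T}_h}\right)<\coth\left(\frac{\hbar\omega_c}{2k_B\mathcal{T}_c}\right)$, and since $\coth$ is strictly decreasing this is equivalent to $\omega_h/\mathcal{T}_h>\omega_c/\mathcal{T}_c$, i.e. exactly to the assumption $\omega_h/\omega_c>\mathcal{T}_h/\mathcal{T}_c$. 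Equivalently, and a bit more transparently: the minimal final energy (\ref{EDmin}) is the average energy of the equilibrium state at frequency $\omega_c$ and temperature $\mathcal{T}'=\frac{\omega_c}{\omega_h}\mathcal{T}_h$ — the adiabatic image of state $C$, since an isentropic process of the quantum oscillator keeps the occupation numbers, hence $E/\omega$, fixed, and the minimizing state in the derivation of (\ref{Emin}) indeed restores equipartition — and the hypothesis says precisely $\mathcal{T}'<\mathcal{T}_c$, so monotonicity of $E$ in temperature again yields $E_{D,min}=E(\omega_c,\mathcal{T}')<E(\omega_c,\mathcal{T}_c)=E_A$.

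I do not expect a genuine obstacle; the only points needing a line of justification are the two monotonicity properties of $E(\omega,\mathcal{T})$ and the observation that $E_{D,min}$ is itself the energy of an equilibrium state at the rescaled temperature $\mathcal{T}'$. Together these make every inequality in the chain strict and localize the use of the single hypothesis to the single place where it is needed. In the classical high-temperature regime the identical argument applies verbatim with $E(\omega,\mathcal{T})=k_B\mathcal{T}$, whereupon $E_A<E_C\Leftrightarrow\mathcal{T}_c<\mathcal{T}_h$ and $E_{D,min}<E_A\Leftrightarrow\omega_h/\omega_c>\mathcal{T}_h/\mathcal{T}_c$.
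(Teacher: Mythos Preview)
Your argument is correct. For the inequality $E_{D,min}<E_A$ you do exactly what the paper does: cancel the common prefactor $\hbar\omega_c/2$ and use that $\coth$ is strictly decreasing, which reduces the claim to $\omega_h/\mathcal{T}_h>\omega_c/\mathcal{T}_c$, i.e.\ the hypothesis.

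For $E_A<E_C$ your route differs from the paper's and is slightly cleaner. The paper sets $x=\hbar\omega_c/(2k_b\mathcal{T}_c)$, $y=\hbar\omega_h/(2k_b\mathcal{T}_h)$, $a=\mathcal{T}_c/\mathcal{T}_h<1$, reduces $E_A<E_C$ to $y\coth y>x\coth x$, and then invokes the \emph{hypothesis} again to get $y>x$ before using the monotonicity of $z\mapsto z\coth z$. Your chain $E(\omega_c,\mathcal{T}_c)<E(\omega_c,\mathcal{T}_h)<E(\omega_h,\mathcal{T}_h)$ uses only the ambient facts $\mathcal{T}_c<\mathcal{T}_h$ and $\omega_c<\omega_h$, together with the same monotonicity of $z\coth z$ (for the $\omega$-step) and the decrease of $\coth$ (for the $\mathcal{T}$-step). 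So you establish $E_A<E_C$ without the extra assumption, which sharpens the statement and makes precise your remark that the hypothesis is needed only for $E_{D,min}<E_A$. Both approaches ultimately rest on the same one-line calculus fact that $z\coth z$ is increasing on $(0,\infty)$; the paper spells out that derivative computation, and you should too when you write this up.
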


\begin{proof}
$E_A, E_C$ are average energies of thermal equilibrium states at temperatures $\mathcal{T}_c, \mathcal{T}_h$ and frequencies $\omega_c, \omega_h$, respectively, thus
\begin{equation}
\label{Equilibrium_Energies}
E_A=\frac{\hbar\omega_c}{2}\coth\left(\frac{\hbar\omega_c}{2k_b\mathcal{T}_c}\right),\quad E_C=\frac{\hbar\omega_h}{2}\coth\left(\frac{\hbar\omega_h}{2k_b\mathcal{T}_h}\right),
\end{equation}
where $k_b$ is Boltzmann's constant. Using these expressions and (\ref{EDmin}) we obtain
\begin{equation*}
E_A>E_{D,min}\Leftrightarrow\coth\left(\frac{\hbar\omega_c}{2k_b\mathcal{T}_c}\right)>\coth\left(\frac{\hbar\omega_h}{2k_b\mathcal{T}_h}\right),
\end{equation*}
which is true since $\coth$ is a decreasing function of its argument and $\omega_h/\omega_c>\mathcal{T}_h/\mathcal{T}_c\Rightarrow\omega_c/\mathcal{T}_c<\omega_h/\mathcal{T}_h$. For the second inequality $E_C>E_A$ note that, if we set
\begin{equation*}
\label{ratios}
\frac{\hbar\omega_c}{2k_b\mathcal{T}_c}=x,\quad\frac{\hbar\omega_h}{2k_b\mathcal{T}_h}=y,\quad\frac{\mathcal{T}_c}{\mathcal{T}_h}=a,
\end{equation*}
then it becomes
\begin{equation*}
y\coth y>ax\coth x
\end{equation*}
Since the temperature of the hot reservoir is obviously larger than that of the cold reservoir, it is $a<1$, thus it is sufficient to show that $y\coth y>x\coth x$. But $y/x=\omega_h\mathcal{T}_c/\omega_c\mathcal{T}_h>1$, so it is sufficient to show that the function $h(z)=z\coth z$ is increasing for $z\geq 0$. It is
\begin{equation*}
h'(z)=\frac{\sinh(2z)-2z}{2\sinh^2 z}
\end{equation*}
and if we set $w(z)=\sinh(2z)-2z$ it is also
\begin{equation*}
w'(z)=2[\cosh(2z)-1]\geq 0.
\end{equation*}
Thus $w(z)\geq w(0)=0$ and $h'(z)\geq h'(0)=0$ for $z\geq 0$, so $h(z)$ is indeed an increasing function of its argument.
\end{proof}

On the other hand, if the third step of the cycle is a sudden quench where $T=0$, the corresponding energy is
\begin{equation*}
E_{D,sc}=\left(1+\frac{\omega_c^2}{\omega_h^2}\right)\frac{E_C}{2},
\end{equation*}
which is obtained from (\ref{sudden_quench}) using the analogy (\ref{analogy}). Unlike to the previous case, there is no constant ordering between $E_A, E_{D,sc}$ but it depends on the values of the parameters, frequencies and temperatures. If $E_{D,sc}<E_A$, then the minimum driving time for the third step of the cycle is obviously $T=0$; the heat machine can operate as a refrigerator even if the third step is a sudden quench. The interesting situation arises when $E_{D,sc}\geq E_A$. In this case, there is a minimum driving time for the third step of the cycle; if this step is performed faster, then the heat machine ceases to operate as a refrigerator. According to (\ref{ref_condition}), this minimum driving time is encountered when $E_D=E_A$. In this case we have
\begin{equation}
\label{rE}
r_E=\frac{E_C}{E_D}=\frac{E_C}{E_A}>1,\quad\gamma=\sqrt{\frac{\omega_h}{\omega_c}}>1,
\end{equation}
following the analogy (\ref{analogy}). With these values for $\gamma$ and $r_E$, the minimal driving time can be calculated using Theorem \ref{prop:time}. Note that, as pointed out in \cite{Abah16}, the calculation of this time is a difficult task, but here we provide the appropriate framework and a systematic procedure.

As an example we consider the case with $\gamma=10\Rightarrow\omega_h/\omega_c=\gamma^2=100$, where the energy ratio is taken such that the final curve (\ref{final_curve}) meets the $x_1$-axis at the point $(\bar{\gamma}=8,0)$. This is done for comparison reasons with the case where the final point is $(\bar{\gamma}=8,0)$, as we discuss below. If we set $x_1=\bar{\gamma}=8, x_2=0$ in (\ref{final_curve}) we obtain
\begin{equation*}
r_E=\frac{2\bar{\gamma}^2}{1+\left(\frac{\bar{\gamma}}{\gamma}\right)^4}=90.8059.
\end{equation*}
If we also set
\begin{equation*}
\mathcal{T}_c=\frac{\hbar\omega_c}{2k_b}=\frac{1}{\gamma^2}\frac{\hbar\omega_h}{2k_b}=0.01\frac{\hbar\omega_h}{2k_b}
\end{equation*}
then, from (\ref{rE}) and (\ref{Equilibrium_Energies}) we obtain
\begin{equation*}
\mathcal{T}_h=\frac{1}{\coth^{-1}\left[\frac{r_E}{\gamma^2}\coth\left(\frac{\hbar\omega_c}{2k_b\mathcal{T}_c}\right)\right]}\frac{\hbar\omega_h}{2k_b}=0.8218\frac{\hbar\omega_h}{2k_b}.
\end{equation*}
\begin{table}[tbhp]
\caption{Extremal times (units $\omega_h^{-1}$)}
\label{tab:times}
\centering
\begin{tabular}{|c|c|c|c|c|} \hline
$T^+_{11}$ & $T^+_{12}$ & $T^+_{31}$ & $T^+_{32}$ &  $T^+_{51}$ \\ \hline
8.00794 & 12.53205 & \textbf{7.38567} & 8.77552 & 9.55663 \\  \hline \hline
$T^+_{52}$ & $T^-_{31}$ & $T^-_{32}$ & $T^-_{51}$ & $T^-_{52}$ \\ \hline
10.49350 & 9.76875 & 14.22294 & 9.57303 & 10.80736 \\ \hline
\end{tabular}
\end{table}
In Table \ref{tab:times} we show the times (in units $\omega_h^{-1}$), calculated using Theorem \ref{prop:time} with the above parameter values, corresponding to the extremals for which the transcendental equation (\ref{transcendental}) has at least one solution. In each of these times, the superscript indicates which ($\pm$) transcendental equation was used, while in the subscript the first number indicates the switchings and the second one the order of the solution. The minimum time is highlighted with bold, while the corresponding optimal trajectory is depicted in Fig. \ref{fig:trajectory}. Blue solid line corresponds to $X$-segments ($u=u_1$), while red dashed line corresponds to $Y$-segments ($u=u_2$). The black solid line close to the $x_1$-axis corresponds to the final curve. Observe that the final point of the trajectory lies close to the point $(\bar{\gamma}=8,0)$, where the final curve meets $x_1$-axis. The situation is magnified in Fig. \ref{fig:detail}. The minimum time $T^+_{31}=7.38567$ to reach the final curve is slightly smaller than the minimum time to reach $(\bar{\gamma}=8,0)$, $\bar{T}=7.38568$, which can be calculated using the results of \cite{Stefanatos16a}. As it is clear from Fig. \ref{fig:detail}, the steep slope of the final curve close to $(8,0)$ is exploited to obtain a lower minimum time.
\begin{figure}[tbhp]
\centering
\subfloat[Optimal trajectory]{\label{fig:trajectory}\includegraphics[width=0.5\linewidth]{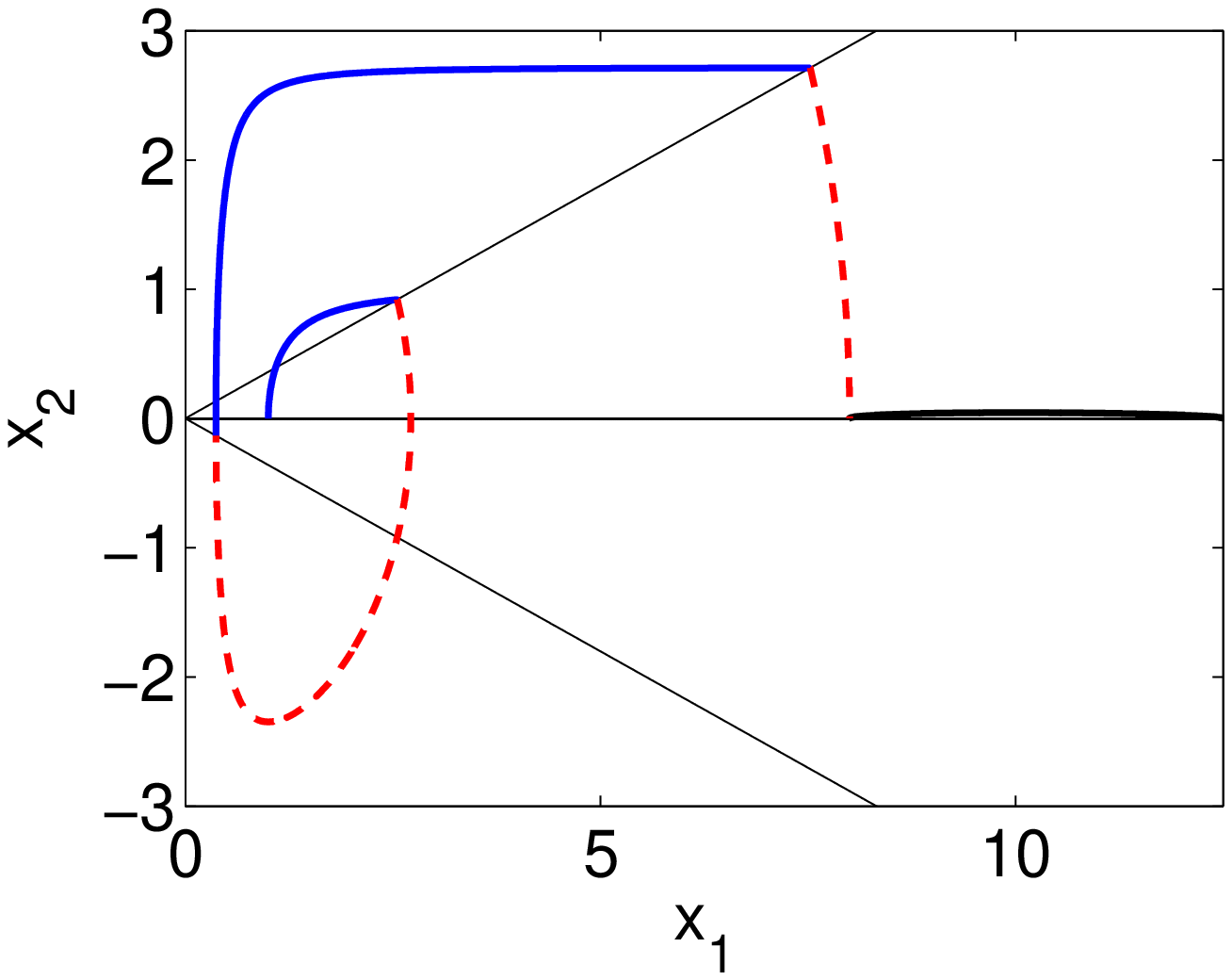}}
\subfloat[Magnified detail]{\label{fig:detail}\includegraphics[width=0.5\linewidth]{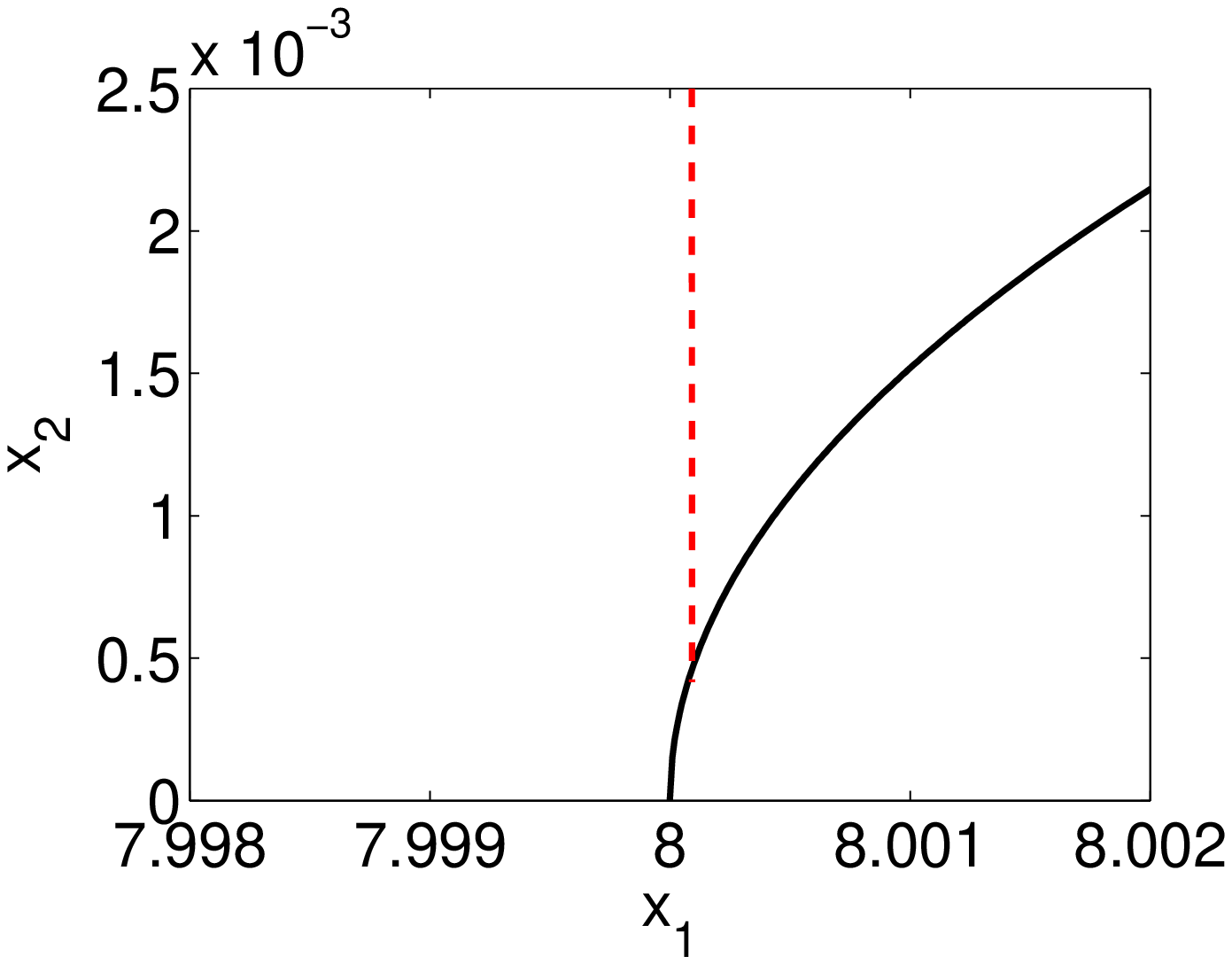}}
\caption{Optimal trajectory for the parameter values given in the text (left panel). Magnified detail around the final point on the target curve (right panel).}
\label{fig:optimal_trajectory}
\end{figure}

\subsection{Connection with quantum finite-time availability}

The concept of quantum finite-time availability describes the potential work which can be obtained by a finite-time process which is too short to gain all the work by bringing a thermodynamic ensemble of quantum systems into thermal equilibrium with an environment \cite{Hoffmann15}. Consider for example the third step of the Otto cycle above, where the frequency of the oscillator is decreased from $\omega_h$ to $\omega_c$. This isentropic expansion (recall that the frequency corresponds to inverse volume) is analogous to the expansion of a piston, thus work is performed. If the expansion time $T$ is larger than a necessary minimum time $T_{min}$, which can be calculated following the procedure described in our recent work \cite{Stefanatos16a}, then the available work takes its maximum value which is
\begin{equation*}
W_{max}=E_C-E_{D,min}=\left(1-\frac{\omega_c}{\omega_h}\right)E_C.
\end{equation*}
If $T<T_{min}$ then $\mbox{min}\{E_D\}>E_{D,min}$ and the available work is
\begin{equation*}
W=E_C-\mbox{min}\{E_D\}<W_{max}
\end{equation*}
The authors of \cite{Hoffmann15} consider an extremal of the form $XY$ with only one intermediate switching, fix the time $T<T_{min}$ and obtain the available work $W$ by minimizing numerically $E_D$. The framework presented in the present paper can obviously be used to solve the dual problem: Fix $E_D>E_{D,min}$ and find the corresponding minimum time $T<T_{min}$. This framework is more general, since it can provide more complex solutions, like for example the $XYXY$ optimal trajectory of the previous subsection.

We close the applications section by using the formulas of Theorem \ref{prop:time} to elucidate a point made by the authors of \cite{Hoffmann15}. Specifically, they numerically observe that for an $XY$ extremal, when the process time $T\rightarrow 0$, the times spent on the $X$- and $Y$-segments tend to be equal. Here we show that this is indeed the case. Note first that for an $XY$ extremal with duration $T\rightarrow 0$, the time spent on the $X$-segment is $\tau_-$ (\ref{time_in}), corresponding to the switching point closer to the starting point, while the time spent on the $Y$-segment is $\tau$ (\ref{time_fi}). The limit $T=\tau_-+\tau\rightarrow 0$ corresponds to $s\rightarrow 0$, since in this limit we have $\cos(2\sqrt{u_1}\tau_-)\rightarrow 1$ and $\cos(2\sqrt{u_2}\tau)\rightarrow 1$ from (\ref{time_in}), (\ref{time_fi}). If we use the same equations to expand these cosines to first order in $s$ we obtain
\begin{align*}
\cos(2\sqrt{u_1}\tau_-)&=1-\frac{2u_1}{(1-u_1)^2}s+h.o.t.,\\
\cos(2\tau)&=1-\frac{2}{(1-u_1)^2}s+h.o.t.,
\end{align*}
where we have used that $u_2=1$. Using the small $x$ expansion $\cos x\approx 1-x^2/2$, we obtain
\begin{equation}
\label{equal_times}
\tau_-\approx\tau\approx\frac{\sqrt{s}}{1-u_1}.
\end{equation}

Consider for example the case with $\gamma=10$ ($\omega_h/\omega_c=\gamma^2=100$) and
\begin{equation*}
r_E=\frac{2\gamma^4}{\gamma^4+1}+0.0005=2.0003
\end{equation*}
\begin{figure}[htbp]
  \centering
  \includegraphics[width=0.5\linewidth]{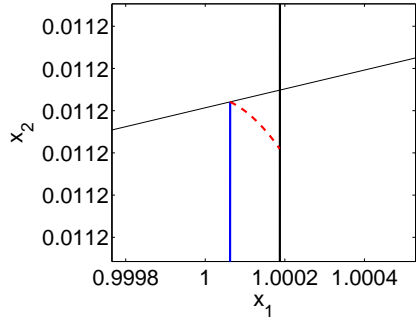}
  \caption{$XY$ extremal when $r_E$ is very close to its minimum value.}
  \label{fig:small_s}
\end{figure}
where the energy ratio is taken slightly larger than its minimum value given in (\ref{r}). As a consequence of this choice of $r_E$, the final curve lies very close to the $X$-segment. This can be seen from the axis numbering in Fig. \ref{fig:small_s} where we have done a substantial magnification to distinguish between the $X$-segment (blue solid line) and the final curve (black solid line). Note that the $Y$-segment (red dashed line) is also shown. Using the transcendental equation (\ref{transcendental}) with $n=0$ we find $s=0.000125$ and duration $T^-_1=\tau_-+\tau=0.022364$, with $\tau_-=0.011183$ and $\tau=0.011181$. The approximate formula (\ref{equal_times}) gives $\tau_-\approx\tau\approx 0.011181$, in very good agreement with the numerically obtained values.

\section{Conclusions}
\label{sec:conclusions}

Using the tools of geometric optimal control, we solved the problem of minimum-time transitions between thermal equilibrium and fixed average energy states of the quantum parametric oscillator. We then applied the results obtained to answer two questions from quantum thermodynamics. First, to find the minimum driving time for a quantum refrigerator, and second, to quantify the quantum finite-time availability of the parametric oscillator.



\bibliographystyle{plain}
\bibliography{shortbib}




\end{document}